\newtheorem{theorem}{Theorem}[section]
\newtheorem{conjecture}[theorem]{Conjecture}
\numberwithin{equation}{section}
\begin{document}

 \title[{\tiny Congruences modulo powers of 2 and 3 for overpartition $k$-tuples}]{Congruences modulo powers of 2 and 3 for overpartition $k$-tuples}
 \maketitle
\begin{center}
{G. Kavya Keerthana, S. Ananya and Ranganatha D.\\
 \vspace{.5cm}

Department of Mathematics, Central University of Karnataka\\
Kalaburagi-585376, India\\
E-mail: kavya.27598@gmail.com, ananyasahadev2525@gmail.com and ddranganatha@gmail.com}
 \end{center}
\begin{abstract} Let $\overline{p}_{k}(n)$ denote the number of overpartition $k$-tuples of $n$. In 2023, Saikia \cite{saikia} conjectured the following congruences: 
\begin{align*} 
\overline{p}_{q}(8n+2)& \equiv 0 \pmod{4},\quad
\overline{p}_{q}(8n+3)\equiv 0 \pmod{8},\quad 
\overline{p}_{q}(8n+4) \equiv 0 \pmod{2},\\
\overline{p}_{q}(8n+5)& \equiv 0 \pmod{8},\quad 
\overline{p}_{q}(8n+6) \equiv 0 \pmod{8},\quad
\overline{p}_{q}(8n+7)\equiv 0 \pmod{32},
\end{align*}
where $n\geq0$ and $q$ is prime. Recently, Sellers \cite{sellers2024elementary} showed that these congruences hold for all odd integers $q$ (not necessarily prime). In this paper,  we show that the above congruences hold for all positive integers $q$ (not necessarily odd). We also prove the following congruences on $\overline{OPT}_k(n)$, the number of overpartition $k$-tuples with odd parts of $n$: For all $i,j\geq 1$, $n\geq 0$,   $r$ not a multiple of 2, $k$ not a multiple of 2 or 3, and $\ell$ not a power of 2,  nor a multiple of 2 or 3, we have
\begin{align*}
\overline{OPT}_{2^i\cdot r}(8n+7)& \equiv 0 \pmod{2^{i+4}},\\
 \overline{OPT}_{3^i\cdot 2^j\cdot k}(3n+2)& \equiv 0 \pmod{3^{i+1}\cdot 2^{j+2}},\\
  \overline{OPT}_{3^i\cdot 2^j\cdot k}(3n+1)& \equiv 0 \pmod{3^{i}\cdot 2^{j+1}},\\
\overline{OPT}_{3^i\cdot \ell}(3n+2)& \equiv 0 \pmod{3^{i+1}\cdot 2},\\
 \overline{OPT}_{3^i\cdot \ell}(3n+1)& \equiv 0 \pmod{3^{i}\cdot 2},\end{align*}
 where the first congruence was posed as a conjecture by Sarma et al. \cite{saikiasarma} and the latter four were conjectured by Das et al. \cite{DSS}.
   
\vspace{2mm}
\noindent\textsc{2010 Mathematics Subject Classification.} 11P83, 05A15, 05A17\\
\vspace{2mm}
\noindent\textsc{Keywords and phrases.} Overpartition $k$-tuples, Congruences
\end{abstract}

\section{Introduction}
In his book, MacMahon \cite{mac} has introduced a new combinatorial object, which has been extensively used to interpret several prominent identities like Ramanujan’s $_1\psi_1$ summation, Heine’s transformation, and Lebesgue’s identity.
Corteel and Lovejoy \cite{corteel} have laid the foundation for the premise of this object and named it overpartitions.
An overpartition of $ n$ is a non-increasing sequence of natural numbers whose sum is $n$ in which the first occurrence of a number may be overlined. Let $\overline{p}(n)$
 denote the number of overpartitions of $n$. For convenience, define $\overline{p}(0)=1$. Overpartitions are also known as standard MacMahon diagrams, joint partitions, superpartitions, dotted partitions, and jagged partitions (a term used particularly in theoretical physics).  
Several researchers discovered the divisibility properties of $\overline{p}(n)$. See \cite{dasappa2024further, liang2025school} and the references for recent works.

 Later, Lovejoy \cite{lovejoy} introduced the overpartition pairs, which also played a key role in proving similar identities as mentioned above. More recently, various arithmetic properties have been revealed for overpartition pairs. For more information and references in this direction, see \cite{bring, chen, kim}.
A natural generalization of overpartition pairs was given by Keister, Sellers and Vary \cite{keister} called the overpartition $k$-tuples, also identified as $k$-colored overpartitions. An overpartition $k$-tuple
of a positive integer $n$ is a $k$-tuple of overpartitions $(\pi_1,\pi_2,\ldots,\pi_k)$ such that the sum of the parts of $\pi_i$'s equal $n$. If $\overline{p}_{t}(n)$ is the number of overpartition $t$-tuples ($t$-colored overpartitions) of $n$, then its generating function is given by
\begin{align}\label{tt}
    \sum_{n=0}^{\infty}\overline{p}_{t}(n)q^n=\frac{f_2^t}{f_1^{2t}},
\end{align}
where $f_k=(q^k;q^k)_\infty=\displaystyle\prod_{n=1}^{\infty}(1-q^{kn})$, $|q|<1$.
Note that for $t=1$, we have $\overline{p}_{1}(n)=\overline{p}(n)$. \\

In 2010, Chen  \cite{chen2} gave a short proof of some congruences proved in \cite{keister} and obtained some new congruences for $\overline{p}_k(n)$ modulo prime $\ell$ and integer $2k$. In 2022, Nayaka and Naika \cite{nayaka2022congruences} proved several infinite families of congruences modulo 16 and 32 for $\overline{p}_{t}(n)$ with $t=5, 7, 11$ and $13$. For the same values of  $t$, by implementing Radu’s algorithm \cite{radu2015algorithmic}, Saikia \cite{saikia}  discovered several new congruences modulo powers of 2 for $\overline{p}_{t}(n)$ and some of his results improve  Nayaka and Naika \cite{nayaka2022congruences} results. In the same paper, Saikia \cite{saikia} proposed the following conjecture:
\begin{conjecture}\label{Con}
    For all $n\geq 0$ and primes $q$, we have
    \begin{align}
 \label{E1}   \overline{p}_{q}(8n+1)& \equiv 0 \pmod{2},\\
\label{E2} \overline{p}_{q}(8n+2)& \equiv 0 \pmod{4},\\
\label{E3}   \overline{p}_{q}(8n+3)& \equiv 0 \pmod{8},\\
\label{E4}  \overline{p}_{q}(8n+4)& \equiv 0 \pmod{2},\\
 \label{E5} \overline{p}_{q}(8n+5)& \equiv 0 \pmod{8},\\
\label{E6}  \overline{p}_{q}(8n+6)& \equiv 0 \pmod{8},\\ 
\label{E7} \overline{p}_{q}(8n+7)& \equiv 0 \pmod{32}.
    \end{align}
\end{conjecture}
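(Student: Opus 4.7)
Our plan is to extend Sellers' result (which covers odd $t$) to all positive integers $t$ by induction on the $2$-adic valuation of $t$. Write $t = 2^a m$ with $m$ odd; the base case $a = 0$ is exactly Sellers' theorem. For the inductive step, assume the conjecture holds for $t' = 2^{a-1} m$, and deduce it for $t = 2 t'$.

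The engine is that \eqref{tt} factors cleanly:
\[
\sum_{n \geq 0} \overline{p}_t(n)\, q^n = \left(\frac{f_2}{f_1^2}\right)^t = \left(\sum_{n \geq 0} \overline{p}_{t'}(n)\, q^n\right)^{2},
\]
so $\overline{p}_t(N) = \sum_{i+j=N} \overline{p}_{t'}(i)\, \overline{p}_{t'}(j)$. Let $(c_0, c_1, \ldots, c_7) = (0, 1, 2, 3, 1, 3, 3, 5)$ record the target $2$-adic valuations from \eqref{E1}--\eqref{E7}. By the inductive hypothesis, $v_2(\overline{p}_{t'}(8k+r)) \geq c_r$ for all $r \in \{1, \ldots, 7\}$, and trivially for $r = 0$.

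Now fix $r \in \{1, \ldots, 7\}$ and split the convolution for $N = 8n+r$ according to the residues $(r_1, r_2)$ of $(i, j)$ modulo $8$ with $r_1 + r_2 \equiv r \pmod 8$. When $r_1 \neq r_2$, the ordered pairs $(r_1, r_2)$ and $(r_2, r_1)$ combine, producing a factor of $2$ and a contribution with $v_2 \geq c_{r_1} + c_{r_2} + 1$; when $r_1 = r_2$ (possible only for $r$ even, with $r_1 \in \{r/2,\, r/2 + 4\}$), the contribution has valuation at least $2 c_{r_1}$. A direct case-by-case check verifies that the minimum over all admissible pairs meets $c_r$ in every residue class. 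The sharpest cases are $r = 2$ (the self-pair $r_1 = 1$ gives exactly $2$), $r = 5$ (the pair $\{1, 4\}$ gives exactly $3$), and $r = 7$ (the pairs $\{1, 6\}$ and $\{3, 4\}$ each yield exactly $5$, saturating the target modulus $32$).

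The main obstacle is the combinatorial bookkeeping: one must verify that in every residue class, the extra factor of $2$ from convolution symmetry (or the doubled valuation of a self-pair) suffices to reach the sharp bound $c_r$. Since the bound is attained with no slack in several residues, most critically $r = 7$, the induction rests on Sellers' hypothesis being sharp as stated and not on any finer divisibility; once the case analysis is verified for $a = 1$, the same argument iterates and gives the result for every positive integer $t$.
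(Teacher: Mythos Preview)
Your argument is correct. The convolution identity $\overline{p}_{2t'}(N)=\sum_{i+j=N}\overline{p}_{t'}(i)\overline{p}_{t'}(j)$ together with the residue-class bookkeeping does close the induction; I rechecked all eight residues and in each case the minimum of $c_{r_1}+c_{r_2}+1$ over unordered off-diagonal pairs and of $2c_{r_1}$ over self-pairs meets the target $c_r$, with equality exactly at the places you flagged ($r=2,5,7$). One small point worth stating explicitly for the reader: the ``extra factor of $2$'' in the off-diagonal case comes from the equality $\sum_{i\equiv r_1}\overline{p}_{t'}(i)\overline{p}_{t'}(N-i)=\sum_{i\equiv r_2}\overline{p}_{t'}(i)\overline{p}_{t'}(N-i)$ via $i\mapsto N-i$, which uses that both factors are the \emph{same} sequence; this is obvious but is the crux of the symmetry gain.

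Your route is genuinely different from the paper's. The paper does not use Sellers' odd-$t$ result as a black box, nor any induction on $v_2(t)$. Instead it works directly with the generating function: from $f_1^{2^k}\equiv f_2^{2^{k-1}}\pmod{2^k}$ one rewrites $f_2^t/f_1^{2t}\equiv (f_1^2/f_2)^{Mt}\pmod{2^k}$ for suitable $M$, substitutes the $2$-dissection $f_1^2=\dfrac{f_2 f_8^5}{f_4^2 f_{16}^2}-2q\,\dfrac{f_2 f_{16}^2}{f_8}$, expands binomially, and reads off the residue classes of $n$ modulo $4$ and then $8$. This handles every $t$ in one stroke and simultaneously yields finer information (e.g.\ the extra congruences in Theorems~\ref{T1}--\ref{T3} such as $\overline{p}_t(4n+3)\equiv 0\pmod{16}$ for $t\not\equiv 1\pmod 4$, and the internal recursion $\overline{p}_t(4n)\equiv\overline{p}_t(n)\pmod 4$). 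Your approach is shorter and entirely elementary---no dissection identities, no $q$-series manipulations---but it imports Sellers' theorem wholesale and, because the induction step only preserves the exact targets $c_r$ with no slack, it cannot by itself sharpen any of the moduli or deliver the auxiliary congruences the paper obtains along the way.
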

Recently, Sellers \cite{sellers2024elementary} showed that the above conjecture holds for all odd $t$ (not necessarily prime).

In 2012, Lin \cite{Lin}  investigated the arithmetic properties of $\overline{po}_2(n)$, the number of overpartition pairs of $n$ into odd parts and its generating function is given by
\begin{align*}
    \sum_{n\geq 0}\overline{po}_2(n)q^n&= \frac{f_2^6}{f_1^4f_4^2}.
\end{align*}
He obtained several congruences modulo small powers of 2 and 3. Motivated by the Lin \cite{Lin} work,  Adiga and Ranganatha \cite{AdiRan} established several congruences for $\overline{po}_2(n)$ modulo higher powers of 2. Later Ahmed, Barman and Ray \cite{ABR} obtained congruences modulo 32, 64 and 128 for $\overline{po}_2(n)$ with the aid of Ramanujan’s theta function identities and some known identities of $t_k(n)$ for $k= 6, 7$, where $t_k(n)$ denotes the number of representations of $n$ as a sum of $k$ triangular numbers. \\

The generating function for $\overline{OPT}_k(n)$, the number of overpartition $k$-tuples of $n$ with odd parts is given by
\begin{align}\label{LL1}
    \sum_{n\geq 0}\overline{OPT}_k(n)q^n&= \frac{f_2^{3k}}{f_1^{2k}f_4^k}.
\end{align} 
Several mathematicians have studied the divisibility properties of $\overline{OPT}_k(n)$ for different values of $k$. Recently, Drema and Saikia \cite{drema} have established several infinite families of congruences modulo
small powers of 2 and 3 for  $\overline{OPT}_3(n)$
by employing $q$-series
and Ramanujan’s theta-function identities. Quite recently, Sarma, Saikia and Sellers \cite{saikiasarma} proved several congruences for $\overline{OPT}_3(n)$ and $\overline{OPT}_k(n)$. At the end of the paper, they conjectured seven congruences for $ \overline{OPT}_{2^i\cdot r}(n)$, where $r$ is odd and $i\geq 1$. Very recently, Das, Saikia and Sarma \cite{DSS} proved three of the seven congruences. The remaining four congruences are still not proved, which are given below:
    \begin{conjecture}\label{Con2}
    For all $i\geq 1$, $n\geq 0$ and odd $r>0$, we have
    \begin{align*}
 %   \overline{OPT}_{2^i\cdot r}(8n+1)& \equiv 0 \pmod{2^{i+1}},\\
\overline{OPT}_{2^i\cdot r}(8n+2)& \equiv 0 \pmod{2^{2i+1}},\\
%  \overline{OPT}_{2^i\cdot r}(8n+3)& \equiv 0 \pmod{2^{i+3}},\\
  \overline{OPT}_{2^i\cdot r}(8n+4)& \equiv 0 \pmod{2^{2i+4}},\\
 % \overline{OPT}_{2^i\cdot r}(8n+5)& \equiv 0 \pmod{2^{i+2}},\\
 \overline{OPT}_{2^i\cdot r}(8n+6)& \equiv 0 \pmod{2^{2i+3}},\\  \overline{OPT}_{2^i\cdot r}(8n+7)& \equiv 0 \pmod{2^{i+4}}.
    \end{align*}
\end{conjecture}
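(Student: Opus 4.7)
The plan is to prove Conjecture~\ref{Con2} by induction on $i$, exploiting the algebraic identity
\begin{align*}
\sum_{n\geq 0}\overline{OPT}_{2k}(n)\,q^n \;=\; \Bigl(\sum_{n\geq 0}\overline{OPT}_{k}(n)\,q^n\Bigr)^{2},
\end{align*}
an immediate consequence of \eqref{LL1}. Extracting coefficients yields the convolution
\begin{align*}
\overline{OPT}_{2k}(n) \;=\; \sum_{b+c=n}\overline{OPT}_{k}(b)\,\overline{OPT}_{k}(c),
\end{align*}
which reduces the problem to controlling the $2$-adic valuations of $\overline{OPT}_{2^{i}r}(m)$ across all residues $m\pmod 8$.

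The input for the induction is the odd case ($k=r$): one needs $\overline{OPT}_{r}(8n+2)\equiv 0\pmod{2}$, $\overline{OPT}_{r}(8n+4)\equiv 0\pmod{16}$, $\overline{OPT}_{r}(8n+6)\equiv 0\pmod{8}$, and $\overline{OPT}_{r}(8n+7)\equiv 0\pmod{16}$. I would iterate the $2$-dissection of $f_2^{3r}/(f_1^{2r}f_4^r)$ three times, repeatedly using $f_1^{2}\equiv f_2\pmod 2$ together with its binomial refinement $f_1^{2r}=(f_2+2H)^{r}$ for a suitable integer series $H$; these supply explicit expansions modulo $2^{s}$ for any required $s$. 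Several of these base cases are already available in \cite{saikiasarma, DSS}.

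For the inductive step from $i$ to $i+1$, apply the convolution identity with $k=2^{i}r$. When $a=7$, the sum $\sum_{b+c=8n+7}\overline{OPT}_{2^{i}r}(b)\,\overline{OPT}_{2^{i}r}(c)$ has no diagonal term, since $8n+7$ is odd, so pairing $(b,c)$ with $(c,b)$ produces an automatic factor of $2$. Exactly one of $b,c$ is then odd; splitting the odd values by their class modulo~$8$ and invoking the inductive bounds on $\overline{OPT}_{2^{i}r}$ at odd residues, combined with this pairing factor, upgrades $2^{i+4}$ to $2^{i+5}$ and closes the $8n+7$ congruence.

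The main obstacle is the three cases $a\in\{2,4,6\}$. When $a$ is even, the convolution contains a diagonal contribution $\overline{OPT}_{2^{i}r}\bigl((8n+a)/2\bigr)^{\!2}$ that loses the automatic pairing factor of $2$, and squaring doubles the $2$-adic valuation; moreover, the predicted jump in modulus from $2^{2i+\delta}$ to $2^{2i+2+\delta}$ demands sharper divisibility for every cross term as well. To close the induction one needs a refined family of bounds $\overline{OPT}_{2^{i}r}(8m+s)\equiv 0\pmod{2^{\alpha_{i}(s)}}$ simultaneously for all $s\in\{0,\dots,7\}$, calibrated so that both the paired sums $\alpha_{i}(s)+\alpha_{i}(t)$ over $(s,t)$ with $s+t\equiv a\pmod 8$ and the doubled diagonal valuations beat the target. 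Producing such a system $\alpha_{i}(\cdot)$ that matches all three different growth rates $2^{2i+1},\,2^{2i+4},\,2^{2i+3}$ simultaneously---especially the sharpest one, $2^{2i+4}$ for $a=4$---is the combinatorial heart of the remaining problem.
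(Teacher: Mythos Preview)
First, note that the paper itself does \emph{not} prove the full Conjecture~\ref{Con2}: only the fourth congruence, $\overline{OPT}_{2^{i}r}(8n+7)\equiv 0\pmod{2^{i+4}}$, is established (as Theorem~\ref{T11}); the three even--residue cases $a\in\{2,4,6\}$ are explicitly left open. So on those three your honest assessment---that closing the induction would require an as-yet-unproduced system of valuations $\alpha_{i}(s)$---is exactly the state of affairs in the paper as well. There is nothing to compare there.

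For the $8n+7$ case the paper's argument is quite different from yours. It works directly modulo $2^{i+4}$ for every $i$ at once: one multiplies the generating function by $f_{1}^{14\cdot 2^{i}r}/f_{1}^{14\cdot 2^{i}r}$, applies \eqref{B1} in the form $f_{1}^{2^{i+4}}\equiv f_{2}^{2^{i+3}}\pmod{2^{i+4}}$ to replace the denominator, substitutes the $2$-dissection \eqref{D1} into $(f_{1}^{2})^{7\cdot 2^{i}r}$, expands binomially (only the first five terms survive modulo $2^{i+4}$), and then extracts the $q^{2n+1}$ and $q^{4n+3}$ coefficients. No induction on $i$ is used; the case $i=1$ is handled by one extra dissection, and $i\ge 2$ falls out uniformly.

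Your inductive route for $a=7$, by contrast, has a genuine gap as written. After the pairing, you need every product $\overline{OPT}_{2^{i}r}(b)\,\overline{OPT}_{2^{i}r}(c)$ with $b+c=8n+7$ to be divisible by $2^{i+4}$. You appeal to ``inductive bounds on $\overline{OPT}_{2^{i}r}$ at odd residues,'' but the only odd residue the conjecture covers is $7$; for $b\equiv 1,3,5\pmod 8$ you have no bound from the inductive hypothesis, and the companion even residues $c\equiv 6,4,2$ are precisely the three parts of Conjecture~\ref{Con2} that remain unproved. Concretely, for $(b,c)\equiv(5,2)\pmod 8$, even if one grants the conjectural $2^{2i+1}$ for the $c$-term and nothing for the $b$-term, the product is only guaranteed divisible by $2^{2i+1}$, which is below $2^{i+4}$ for $i\le 2$. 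So your step either is circular (it presupposes the unproved parts of the conjecture) or depends on unstated external bounds for residues $1,3,5$ from \cite{saikiasarma,DSS} that you have not recorded or checked. The paper's direct $q$-series computation avoids this dependency entirely.
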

 Based on numerical evidence, Das, Saikia and Sarma \cite{DSS} posed the following conjectures:
 \begin{conjecture}\label{Con3}
    For all $i,j\geq 1$ and $k$ not a multiple of 2 or 3, we have
    \begin{align}
 \label{H1}   \overline{OPT}_{3^i\cdot 2^j\cdot k}(3n+2)& \equiv 0 \pmod{3^{i+1}\cdot 2^{j+2}}.
    \end{align}
\end{conjecture}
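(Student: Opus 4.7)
Write $G_t(q):=\sum_n\overline{OPT}_t(n)q^n=f_2^{3t}/(f_1^{2t}f_4^t)$, so $G_{ab}=G_a^b$. Since $\gcd(3^{i+1},2^{j+2})=1$, I would prove the 3-adic and 2-adic divisibilities separately and combine via the Chinese Remainder Theorem.

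\emph{Three-adic part.} I would show by induction on $i\geq 1$ that $G_{3^i\cdot 2^j\cdot k}(q)$ is congruent to a series in $q^3$ modulo $3^{i+1}$, which immediately gives $\overline{OPT}_{3^i\cdot 2^j\cdot k}(3n+2)\equiv 0\pmod{3^{i+1}}$. For the base $i=1$, I would use the lifts $f_1^9\equiv f_3^3\pmod{9}$ and $f_2^{18}\equiv f_6^6\pmod{9}$, together with the decompositions $f_1^3=f_3+3X$ and $f_4^3=f_{12}+3W$, to expand
\begin{align*}
G_6 \equiv \frac{f_6^6}{f_3^4 f_{12}^2}-\frac{3 f_6^6 X}{f_3^5 f_{12}^2}-\frac{6 f_6^6 W}{f_3^4 f_{12}^3}\pmod{9}.
\end{align*}
The key observation is that triangular numbers $n(n+1)/2$ are never $\equiv 2\pmod{3}$, so Jacobi's identity $f_1^3=\sum_{n\geq 0}(-1)^n(2n+1)q^{n(n+1)/2}$ forces $X$ to contain no $q^{3m+2}$ terms, and similarly $W$ contains none (since $f_4^3$ has only exponents $2n(n+1)\not\equiv 2\pmod{3}$). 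Hence each of the three summands above is $q^{3m+2}$-free, giving $G_6\equiv H_1(q^3)\pmod{9}$; this extends to $G_{3\cdot 2^jk}=G_6^{2^{j-1}k}$. For the inductive step, writing $G_{3^i 2^j k}=H_i(q^3)+3^{i+1}B$ and cubing, the cross terms in $(H_i+3^{i+1}B)^3$ carry 3-powers $3^{i+2}$, $3^{2i+3}$, and $3^{3i+3}$; since $2i+3\geq i+2$ for $i\geq 1$, one obtains $G_{3^{i+1}\cdot 2^j\cdot k}\equiv H_i(q^3)^3\pmod{3^{i+2}}$, again a series in $q^3$.

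\emph{Two-adic part.} From the classical identities $f_1^2/f_2=1+2\sum_{n\geq 1}(-1)^nq^{n^2}$ and $f_2^2/f_4=1+2\sum_{n\geq 1}(-1)^nq^{2n^2}$, I write $G_1=1+2F$ with $F\equiv U+V\pmod{2}$, where $U\equiv\sum_{n\geq 1}q^{n^2}$ and $V\equiv\sum_{n\geq 1}q^{2n^2}\pmod{2}$. Since $3^ik$ is odd, $G_{3^ik}=(1+2F)^{3^ik}\equiv 1+2F\pmod{4}$, so the corresponding $F_{3^ik}:=(G_{3^ik}-1)/2$ satisfies $F_{3^ik}\equiv F\pmod{2}$. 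Iterated squaring via $G_{2a}=G_a^2$ then yields $G_{2^j\cdot 3^i k}=1+2^{j+1}M_j$ with $M_j\equiv F+F^2\pmod{2}$ for all $j\geq 1$. Using $U^2\equiv V\pmod{2}$ (Frobenius in characteristic 2), one finds $F+F^2\equiv U+V^2\pmod{2}$, whose exponents $n^2$ and $4n^2$ are both $\not\equiv 2\pmod{3}$. Hence $M_j$ has no $q^{3m+2}$ term modulo 2, and the $q^{3n+2}$-coefficient of $G_{2^j\cdot 3^i\cdot k}$ is $2^{j+1}M_j[q^{3n+2}]\equiv 0\pmod{2^{j+2}}$.

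\emph{Main obstacle.} The heart of the argument is the 3-adic base case: lifting from $\pmod{3}$ to $\pmod{9}$ depends critically on the Jacobi observation that $f_1^3$ (and $f_4^3$) contain no $q^{3m+2}$ coefficients, which is not visible from the lifting-the-exponent lemma alone and is exactly what produces the extra factor of 3 beyond the naive $3^i$. The analogous role on the 2-adic side is played by $U^2\equiv V\pmod{2}$, which improves $2^{j+1}$ to $2^{j+2}$ and is precisely what requires the hypothesis $j\geq 1$ (so that the second application of squaring can absorb the ``bad'' $q^{3m+2}$ contributions in $V$).
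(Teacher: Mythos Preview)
Your two-adic argument is correct and is in fact a pleasant alternative to the paper's approach: the paper uses the $3$-dissection $f_1^2/f_2=d(q^3)-2qg(q^3)$ directly for arbitrary $j$, whereas you reach the same conclusion by iterated squaring together with the Frobenius identity $U^2\equiv V\pmod 2$.

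The three-adic argument, however, has a genuine gap. Your induction hypothesis is that $G_{3^i\cdot 2^j\cdot k}$ is congruent to a \emph{series in $q^3$} modulo $3^{i+1}$, but this is already false at the base $i=j=k=1$: one computes $\overline{OPT}_6(1)=12\equiv 3\pmod 9$, so $G_6\not\equiv H_1(q^3)\pmod 9$. What your displayed expansion of $G_6$ actually shows (via the Jacobi observation) is only that each summand is $q^{3m+2}$-free; the second and third summands still carry $q^{3m+1}$ terms with coefficients divisible by $3$ but not by $9$. And ``$q^{3m+2}$-free'' is too weak for your inductive step: if $H_i$ has $q^{3m+1}$ terms, then $H_i^3$ can acquire $q^{3m+2}$ terms from products of two such factors.

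There are two clean fixes. One is to strengthen the hypothesis to
\[
G_{3^i\cdot 2^j\cdot k}\equiv P_i(q^3)+3^{\,i}\,q\,Q_i(q^3)\pmod{3^{i+1}},
\]
which your base-case expansion does establish (the $q^{3m+1}$ contributions all carry an explicit factor $3$), and which is stable under cubing since the dangerous cross term $3u\cdot(3^iqQ_i)^2$ has $3$-valuation $2i+1\ge i+2$ for $i\ge 1$. The other is the paper's route: work directly modulo $3^{i+1}$ for arbitrary $i$ by writing $f_1^3/f_4^3=t-3q\ell(q)$ with $t=h(q^3)a(q^{12})$ a series in $q^3$, and then expanding $(t-3q\ell)^{3^{i-1}\cdot 2^j\cdot k}$ binomially; only the $r=0,1$ terms survive mod $3^{i+1}$, and the $r=1$ term lands entirely in the $q^{3m+1}$ residue class.
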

\begin{conjecture}\label{Con4}
    For all $i\geq 1$ and $j$ not a power of 2,  nor a multiple of 2 or divisible by 3, we have
    \begin{align}
 \label{H2}   \overline{OPT}_{3^i\cdot j}(3n+2)& \equiv 0 \pmod{3^{i+1}\cdot 2}.
    \end{align}
\end{conjecture}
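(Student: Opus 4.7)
\emph{Proof plan.} The plan is to reduce Conjecture \ref{Con4} to the $(3n+1)$- and $(3n+2)$-cases of Conjecture \ref{Con3} (which will be established earlier in the paper under the hypotheses $i, j \geq 1$ and $k$ coprime to $6$), together with the two base-case congruences
\[
\overline{OPT}_{3^i}(3n+1)\equiv 0 \pmod{3^{i}\cdot 2}\qquad\text{and}\qquad \overline{OPT}_{3^i}(3n+2)\equiv 0\pmod{3^{i+1}\cdot 2}
\]
for $\overline{OPT}_{3^i}$ alone. These base cases correspond to the omitted $j=0$ situation of Conjecture \ref{Con3}; we will derive them by induction on $i$ using a $3$-dissection of $f_2^3/(f_1^2 f_4)$ together with standard mod-$2$ identities such as $f_1^2\equiv f_2\pmod 2$.

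The main mechanism will be a convolution on the subscript. From \eqref{LL1}, for any positive integers $a,b$,
\[
\sum_{n\geq 0}\overline{OPT}_{a+b}(n)q^n = \left(\frac{f_2^3}{f_1^2 f_4}\right)^{a+b} = \left(\sum_{n\geq 0}\overline{OPT}_a(n)q^n\right)\left(\sum_{n\geq 0}\overline{OPT}_b(n)q^n\right),
\]
so $\overline{OPT}_{a+b}(n) = \sum_{t=0}^{n} \overline{OPT}_a(t)\,\overline{OPT}_b(n-t)$. Writing $\ell$ for the parameter $j$ of Conjecture \ref{Con4} (so that $\ell$ is odd, coprime to $3$, and not a power of $2$, forcing $\ell - 1 \geq 4$ to be a positive even integer), we factor
\[
\ell - 1 = 2^{u}\cdot 3^{v}\cdot w \qquad\text{with } u \geq 1,\ v \geq 0,\ \gcd(w,6) = 1.
\]
This gives $3^{i}\ell = 3^{i} + 3^{i+v}\cdot 2^{u}\cdot w$, and hence
\[
\overline{OPT}_{3^{i}\ell}(3n+2) = \sum_{t=0}^{3n+2}\overline{OPT}_{3^{i}}(t)\,\overline{OPT}_{3^{i+v}\cdot 2^{u}\cdot w}(3n+2-t).
\]

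A split of this sum according to the residue of $t$ modulo $3$ will close the argument. For $t \equiv 0 \pmod 3$ we have $3n+2-t \equiv 2 \pmod 3$, and the $(3n+2)$-case of Conjecture \ref{Con3} (applied with parameters $i+v$, $u$, $w$) makes the second factor divisible by $3^{i+v+1}\cdot 2^{u+2}$. For $t\equiv 1 \pmod 3$, the first factor is divisible by $3^{i}\cdot 2$ by the base case, and the $(3n+1)$-case of Conjecture \ref{Con3} makes the second divisible by $3^{i+v}\cdot 2^{u+1}$, so the product is divisible by $3^{2i+v}\cdot 2^{u+2}$. For $t \equiv 2 \pmod 3$, the first factor alone is divisible by $3^{i+1}\cdot 2$. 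Since $i \geq 1$, $u \geq 1$, $v \geq 0$ imply $i+v+1 \geq i+1$ and $2i+v \geq i+1$, each summand is divisible by $3^{i+1}\cdot 2$, and the desired congruence follows.

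The hardest part will not be this convolution wrap-up but rather the prior derivation of the $(3n+1)$- and $(3n+2)$-cases of Conjecture \ref{Con3} and of the two base congruences for $\overline{OPT}_{3^{i}}$, both of which require careful tracking of $2$-adic and $3$-adic valuations through iterated dissection identities. Once those ingredients are in place, the argument above handles Conjecture \ref{Con4} uniformly for every admissible $\ell$, with no case analysis on $\ell$.
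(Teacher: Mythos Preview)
Your convolution argument is correct, but it is considerably more roundabout than the paper's approach. The paper observes that the $3$-dissection computation carried out for Conjecture~\ref{Con3} (the derivation leading to \eqref{R2} and then \eqref{R14}) uses nothing about the factor $2^{j}\cdot k$ beyond its coprimality to $3$; replacing $2^{j}\cdot k$ by any $m$ with $\gcd(m,3)=1$ (in particular $m=j$ odd and coprime to $3$) yields $\overline{OPT}_{3^{i}\cdot j}(3n+2)\equiv 0\pmod{3^{i+1}}$ immediately. The mod-$2$ half is then the one-line observation
\[
\sum_{n\ge 0}\overline{OPT}_{3^{i}\cdot j}(n)q^{n}=\frac{f_{2}^{3^{i+1}j}}{f_{1}^{2\cdot 3^{i}j}f_{4}^{3^{i}j}}\equiv \frac{f_{2}^{2\cdot 3^{i}j}}{f_{4}^{3^{i}j}}\equiv 1\pmod 2,
\]
after which the Chinese Remainder Theorem finishes the proof.

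Your route works, but notice that the ``hardest part'' you flag---the base congruences for $\overline{OPT}_{3^{i}}$---is exactly this same $3$-dissection argument specialized to $m=1$. So the convolution machinery does not bypass the core computation; it only postpones it and layers an extra reduction on top. The convolution idea would pay off if the base cases and Conjectures~\ref{Con3}/\ref{Con5} were already available as black boxes from an independent source, but within this paper the direct route is both shorter and avoids needing the $(3n+1)$ information at all. (Minor note: what you call ``the $(3n+1)$-case of Conjecture~\ref{Con3}'' is actually Conjecture~\ref{Con5} in the paper's numbering.)
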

\begin{conjecture}\label{Con5}
    For all $i,j\geq 1$ and $k$ not a multiple of 2 or 3, we have
    \begin{align}
 \label{H3}   \overline{OPT}_{3^i\cdot 2^j\cdot k}(3n+1)& \equiv 0 \pmod{3^{i}\cdot 2^{j+1}}.
    \end{align}
\end{conjecture}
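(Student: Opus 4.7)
The plan is to split via the Chinese Remainder Theorem, proving the $3^{i}$ divisibility and the $2^{j+1}$ divisibility separately. Writing $K=3^{i}\cdot 2^{j}\cdot k$, equation~\eqref{LL1} yields
\begin{align*}
\sum_{n\ge 0}\overline{OPT}_{K}(n)q^{n}=\frac{f_{2}^{3K}}{f_{1}^{2K}f_{4}^{K}}=\left(\frac{f_{2}^{3}}{f_{1}^{2}f_{4}}\right)^{3^{i}\cdot 2^{j}\cdot k},
\end{align*}
so everything reduces to iterated applications of the Frobenius-type congruences
\begin{align*}
f_{m}^{2^{a}}\equiv f_{2m}^{2^{a-1}}\!\!\pmod{2^{a}},\qquad f_{m}^{3^{a}}\equiv f_{3m}^{3^{a-1}}\!\!\pmod{3^{a}}\qquad (a\ge 1),
\end{align*}
each of which follows by a routine induction from $(1-q^{n})^{p}\equiv 1-q^{pn}\pmod{p}$.

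For the $2^{j+1}$-part, I would first raise $f_{1}^{2^{j+1}}\equiv f_{2}^{2^{j}}\pmod{2^{j+1}}$ to the $3^{i}k$-th power, replacing $f_{1}^{2K}$ by $f_{2}^{K}$ modulo $2^{j+1}$. The generating function then collapses to
\begin{align*}
\left(\frac{f_{2}^{3}}{f_{1}^{2}f_{4}}\right)^{K}\equiv \frac{f_{2}^{2K}}{f_{4}^{K}}\pmod{2^{j+1}},
\end{align*}
and a second application of $f_{2}^{2^{j+1}}\equiv f_{4}^{2^{j}}\pmod{2^{j+1}}$, again raised to the $3^{i}k$-th power, shows the right-hand side is $\equiv 1\pmod{2^{j+1}}$. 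Consequently $\overline{OPT}_{K}(n)\equiv 0\pmod{2^{j+1}}$ for every $n\ge 1$, independent of any residue condition on $n$.

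For the $3^{i}$-part, I would apply $f_{m}^{3^{i}}\equiv f_{3m}^{3^{i-1}}\pmod{3^{i}}$ to each of $f_{1}^{2K}$, $f_{2}^{3K}$, and $f_{4}^{K}$. Since $2K$, $3K$, and $K$ are all multiples of $3^{i}$, each substitution is clean and yields
\begin{align*}
\left(\frac{f_{2}^{3}}{f_{1}^{2}f_{4}}\right)^{K}\equiv \left(\frac{f_{6}^{3}}{f_{3}^{2}f_{12}}\right)^{3^{i-1}\cdot 2^{j}\cdot k}\pmod{3^{i}}.
\end{align*}
The right-hand side is manifestly a power series in $q^{3}$, so every coefficient of $q^{3n+1}$ (and of $q^{3n+2}$) is divisible by $3^{i}$. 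Combining with the $2^{j+1}$ divisibility from the previous step via CRT gives the claimed congruence modulo $3^{i}\cdot 2^{j+1}$.

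There is no substantive obstacle: the whole argument is a routine application of lifted Frobenius identities to integer exponents that are already exact multiples of the relevant prime power, and the hypotheses $i,j\ge 1$ are exactly what is needed to drive the two inductions. The coprimality condition $\gcd(k,6)=1$ is not strictly required to execute the argument at this strength; it is built into the statement to keep the modulus sharp and to separate Conjecture~\ref{Con5} cleanly from the companion Conjecture~\ref{Con3}, whose $(3n+2)$-case would require refining the residue from $3^{i}$ to $3^{i+1}$ and thus demands a more delicate analysis of a $3$-dissection of the right-hand side displayed above.
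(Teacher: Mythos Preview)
Your proof is correct and follows the same overall architecture as the paper: split via CRT into a $2^{j+1}$-part and a $3^{i}$-part, collapse the generating function to $1$ modulo $2^{j+1}$ exactly as in the paper's \eqref{R9}--\eqref{R11}, and show modulo $3^{i}$ that the generating function is a power series in $q^{3}$. The only difference is in the execution of the $3^{i}$-step: the paper obtains its $q^{3}$-series \eqref{R10} by first carrying out the full $3$-dissection \eqref{R2} (needed for the stronger $3^{i+1}$-congruence of Conjecture~\ref{Con3}) and then reducing modulo $3^{i}$, whereas you apply the Frobenius congruence $f_{m}^{3^{i}}\equiv f_{3m}^{3^{i-1}}\pmod{3^{i}}$ directly to each factor to land on $\bigl(f_{6}^{3}/(f_{3}^{2}f_{12})\bigr)^{3^{i-1}\cdot 2^{j}\cdot k}$. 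Your route is more self-contained for this particular statement, while the paper's route reuses machinery already built for the companion conjecture; both arrive at the same conclusion.
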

\begin{conjecture}\label{Con6}
    For all $i\geq 1$ and $j$ not a power of 2,  nor a multiple of 2 or divisible by 3, we have
    \begin{align}
 \label{H4}   \overline{OPT}_{3^i\cdot j}(3n+1)& \equiv 0 \pmod{3^{i}\cdot 2}.
    \end{align}
\end{conjecture}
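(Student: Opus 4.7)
\textbf{Proof proposal for Conjecture \ref{Con6}.}

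The plan is to observe that $\gcd(2,3^i)=1$, split the target modulus $2\cdot 3^i$ into its $2$-part and $3^i$-part, and handle each part by an elementary generating-function argument.

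For the factor of $2$, I would use the standard mod-$2$ collapses $f_1^2\equiv f_2\pmod 2$ and $f_2^2\equiv f_4\pmod 2$ to compute
\[
\frac{f_2^3}{f_1^2 f_4}\equiv\frac{f_2^2}{f_4}\equiv 1\pmod 2.
\]
Raising to any positive power gives $\sum_{n\geq0}\overline{OPT}_k(n)q^n\equiv 1\pmod 2$ for every $k\geq1$, and hence $\overline{OPT}_k(n)\equiv 0\pmod 2$ for all $n\geq1$ and every $k\geq1$. This immediately disposes of the factor of $2$ in Conjecture \ref{Con6}, with no use of the hypotheses on $j$.

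For the factor of $3^i$, the key is the Frobenius-type lifting lemma: for every $F(q)\in\mathbb{Z}[[q]]$ and every $a\geq1$,
\[
F(q)^{3^a}\equiv F(q^3)^{3^{a-1}}\pmod{3^a}.
\]
This is proved by induction on $a$, the base case $a=1$ being freshman's dream. The inductive step cubes the congruence and bounds the three cross terms arising in $(F(q^3)^{3^{a-2}}+3^{a-1}C)^3$ by their $3$-adic valuations $a$, $2a-1$, and $3a-3$, each of which is at least $a$ (the last needing $a\geq 2$). Applying this with $F=\bigl(f_2^3/(f_1^2 f_4)\bigr)^{j}$ and $a=i$ gives
\[
\sum_{n\geq0}\overline{OPT}_{3^i j}(n)q^n = \Bigl(\tfrac{f_2^3}{f_1^2 f_4}\Bigr)^{3^i j} \equiv \Bigl(\tfrac{f_6^3}{f_3^2 f_{12}}\Bigr)^{3^{i-1} j}\pmod{3^i},
\]
and the right-hand side is a power series in $q^3$. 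Every coefficient indexed by $3n+1$ therefore vanishes modulo $3^i$, so $\overline{OPT}_{3^i j}(3n+1)\equiv 0\pmod{3^i}$. Combining this with the mod-$2$ statement via $\gcd(2,3^i)=1$ produces $\overline{OPT}_{3^i\cdot j}(3n+1)\equiv 0\pmod{2\cdot 3^i}$, which is precisely Conjecture \ref{Con6}.

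\emph{Main obstacle.} The argument makes no use of the hypotheses on $j$ (odd, coprime to $3$, not a power of $2$); those restrictions only serve to avoid overlap with the sharper sister conjectures \ref{Con3} and \ref{Con5}, which give higher $2$-adic divisibility when the $2$-part of $k$ is nontrivial. The only genuine calculation is the $3$-adic valuation bookkeeping in the inductive step of the lifting lemma; everything else is a direct substitution. It is worth remarking that the same lemma simultaneously yields $\overline{OPT}_{3^i\cdot j}(3n+2)\equiv 0\pmod{3^i}$, but obtaining the full $3^{i+1}$-divisibility claimed for the residue class $2$ in Conjecture \ref{Con4} will require an additional $3$-dissection input beyond the present lemma, so Conjecture \ref{Con6} is the easier of the two $3^i$-statements in the paper.
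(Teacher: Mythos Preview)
Your proposal is correct. The mod-$2$ part is identical to the paper's argument (their (R5)--(R6)), and the Chinese-remainder combination at the end is the same.

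For the $3^i$-part you take a cleaner route than the paper. The paper (by analogy with its derivation of (R10) from (R2)) applies the eta-function congruence $f_1^{3^{k}}\equiv f_3^{3^{k-1}}\pmod{3^k}$ to each $f_m$ separately, rewrites the residual factor as $(f_1^3/f_4^3)^{3^{i-1}j}$, feeds in the explicit $3$-dissections $f_1^3=h(q^3)-3qm(q^3)$ and $1/f_4^3=a(q^{12})+3q^4b(q^{12})+9q^8c(q^{12})$, and then expands binomially to see that modulo $3^i$ only the term supported on $q^{3\mathbb{Z}}$ survives. You instead promote the eta congruence to the general power-series lift $F(q)^{3^a}\equiv F(q^3)^{3^{a-1}}\pmod{3^a}$ and apply it once to $F=(f_2^3/f_1^2f_4)^{j}$, which immediately gives a series in $q^3$ modulo $3^i$ without any dissection input. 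Your route is shorter and uses no special identities; the paper's longer route is set up that way because the same calculation, kept to one higher power of $3$, simultaneously yields the $3^{i+1}$ divisibility needed for Conjecture~\ref{Con4}, which (as you correctly note) your lifting lemma alone does not reach. Your observation that the hypotheses on $j$ are not used is also accurate and matches the paper's proof.
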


Motivated by the work of Sellers \cite{sellers2024elementary} on $\overline{p}_{t}(n)$, in Section 3 of this paper, we show that Conjecture \ref{Con} holds for even $t$ also (not necessarily odd), by providing a simple and unified proof that works for all positive integers $t$. The main results proved in Section 3 are stated below:
\begin{theorem}\label{T1}
For non-negative integers $t$, $k$, $\alpha$ and $n\geq 0$, we have
    \begin{align}
      \label{C1}  \overline{p}_{t}(n)& \equiv 0 \pmod{2},\qquad \quad \text{for all $n\geq 1$,}\\
       \label{C2}  \overline{p}_{t}\left(2^{2\alpha+2}n+2^{2\alpha+1}\right)& \equiv 0 \pmod{4},\\
       \label{C3}  \overline{p}_{t}\left(2^{2\alpha+2}n+3\cdot2^{2\alpha}\right)& \equiv 0 \pmod{4},\\
\label{C10}  \overline{p}_{t}\left(2^{2\alpha+3}n+5\cdot2^{2\alpha}\right)& \equiv 0 \pmod{4},\\
\label{11}\overline{p}_{t}(2^{2\alpha+3}n+2^{2\alpha})&\equiv \begin{cases}
       2 \pmod{4},\quad \text{when $t$ is odd and $n=k(k+1)/2$,}\\
         0\pmod{4},\quad \text{otherwise.}
   \end{cases}
   \end{align}
   \end{theorem}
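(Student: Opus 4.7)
The plan is to exploit the simple product form of the generating function together with a sharp identity for $\overline{p}(n) \pmod 4$. Setting $F(q) := f_2/f_1^2 = \sum_{n\geq 0}\overline{p}(n)q^n$, the identity \eqref{tt} gives $\sum_n \overline{p}_t(n)q^n = F(q)^t$. Since $f_1^2 \equiv f_2 \pmod 2$ we have $F(q) \equiv 1 \pmod 2$, so $F(q)^t \equiv 1 \pmod 2$ for every $t \geq 0$, which immediately yields \eqref{C1}.

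For the remaining mod-$4$ statements I would write $G(q) := F(q) - 1 = \sum_{n\geq 1}\overline{p}(n)q^n$. Because $G \equiv 0 \pmod 2$, one has $G^2 \equiv 0 \pmod 4$, and a binomial expansion gives
\begin{align*}
F(q)^t = (1+G(q))^t \equiv 1 + t\,G(q) \pmod 4,
\end{align*}
so that $\overline{p}_t(n) \equiv t\,\overline{p}(n) \pmod 4$ for every $n \geq 1$. When $t$ is even the right side is already $\equiv 0 \pmod 4$, immediately disposing of the even-$t$ instances of \eqref{C2}, \eqref{C3}, \eqref{C10} and the ``otherwise'' branch of \eqref{11}. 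For odd $t$ the task collapses to proving the corresponding congruences for $\overline{p}(n)$ itself.

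The key ingredient is the sharp identity
\begin{align*}
\sum_{n\geq 0}\overline{p}(n)\,q^n \;\equiv\; 1 + 2\sum_{n\geq 1}(-1)^{n+1}q^{n^2} \pmod 4,
\end{align*}
which I would obtain in one line from Jacobi's theta expansion $\varphi(-q) = f_1^2/f_2 = 1 + 2\sum_{n\geq 1}(-1)^n q^{n^2}$: setting $u := \varphi(-q)-1$ one has $u^2 \equiv 0 \pmod 4$, hence $F(q) = 1/\varphi(-q) \equiv 1 - u \pmod 4$. Consequently $\overline{p}(m) \equiv 0 \pmod 4$ whenever $m \geq 1$ is not a perfect square, while $\overline{p}(k^2) \equiv 2(-1)^{k+1} \equiv 2 \pmod 4$ for all $k \geq 1$ (the sign disappears modulo $4$).

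Each congruence then reduces to a one-line $2$-adic/residue check on the index $m$. In \eqref{C2}, $m = 2^{2\alpha+1}(2n+1)$ has odd $2$-adic valuation, so $m$ is never a square. In \eqref{C3}, $m = 2^{2\alpha}(4n+3)$ with $4n+3\equiv 3 \pmod 4$, never a square. In \eqref{C10}, $m = 2^{2\alpha}(8n+5)$ with $8n+5\equiv 5\pmod 8$, again never a square. Finally in \eqref{11}, $m = 2^{2\alpha}(8n+1)$ is a square exactly when $8n+1$ is an odd square, equivalently when $n = k(k+1)/2$ for some $k\geq 0$; combined with $\overline{p}_t(m) \equiv t\,\overline{p}(m) \pmod 4$ and oddness of $t$ this gives the stated two-branch conclusion. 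The main point of care, rather than a genuine obstacle, is boundary bookkeeping at $n=0$ and $\alpha=0$ so that $\overline{p}(0)=1$ is not mistakenly included in the ``$n\geq 1$'' reductions above; everything else is a mechanical check.
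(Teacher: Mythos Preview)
Your argument is correct and takes a genuinely different route from the paper's.

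The paper works directly with $\sum \overline{p}_t(n)q^n \equiv (f_1^2/f_2)^t \pmod 4$, substitutes the $2$-dissection $f_1^2 = \dfrac{f_2 f_8^5}{f_4^2 f_{16}^2} - 2q\dfrac{f_2 f_{16}^2}{f_8}$, splits into even/odd $t$, and extracts residue classes modulo $4$; from the extraction it obtains the self-similarity $\overline{p}_t(4n)\equiv\overline{p}_t(n)\pmod 4$, which is then iterated in $\alpha$, and finally invokes Jacobi's identity for $f_2^3$ to recognise the triangular-number condition in \eqref{11}. Your approach instead collapses the problem to $t=1$ in one stroke via $(1+G)^t\equiv 1+tG\pmod 4$, and then reads off $\overline{p}(m)\pmod 4$ directly from $F(q)=1/\varphi(-q)\equiv 2-\varphi(-q)\pmod 4$, reducing everything to the elementary question ``is the index a perfect square?''. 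Your route is shorter and more transparent for this particular theorem, and it bypasses the $2$-dissection machinery entirely; on the other hand, the paper's dissection-and-extract method is the same template it reuses for the mod $8$, $16$, $32$ results in Theorems~\ref{T2}--\ref{T4}, so there the investment pays off. Your boundary remark is also well placed: since every index $m$ appearing in \eqref{C2}--\eqref{11} satisfies $m\geq 1$, the constant term $\overline{p}(0)=1$ never interferes.
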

\begin{theorem}\label{T2}
For non-negative integers $t$ and $n\geq 0$, we have
\begin{align}
\label{C5} \overline{p}_{t}(8n+5)& \equiv 0 \pmod{8},\\
 \label{C12} \overline{p}_{t}(8n+6)& \equiv 0 \pmod{8},\\
\label{C13} \overline{p}_{t}(16n+10)& \equiv 0 \pmod{8},\\
% \label{C14} \overline{p}_{t}(16n+14)& \equiv 0 \pmod{8},\\
 \label{C4} \overline{p}_{t}(4n+3)& \equiv 0 \pmod{8}.
 \end{align}
 \end{theorem}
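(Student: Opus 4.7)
The plan is to derive a \emph{uniform-in-$t$} reduction of $\overline{p}_t(n)\pmod 8$ to a linear combination of $\overline{p}(n)$ and $\overline{p}_2(n)$, and then establish the relevant mod-$8$ congruences for these two base functions. From the generating function, $\overline{p}_t$ is the $t$-fold convolution of $\overline{p}$:
$$\overline{p}_t(n)=\sum_{\substack{n_1+\cdots+n_t=n\\ n_i\ge 0}}\prod_{i=1}^t\overline{p}(n_i).$$
Since $\overline{p}(m)\equiv 0\pmod 2$ for $m\ge 1$ (immediate from $f_1^2\equiv f_2\pmod 2$), a summand with $K$ positive indices $n_i$ is divisible by $2^K$, so the terms with $K\ge 3$ vanish modulo $8$. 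Collecting the $K=1$ contribution $t\,\overline{p}(n)$ and the $K=2$ contribution $\binom{t}{2}(\overline{p}_2(n)-2\overline{p}(n))$ (using $\sum_{a+b=n,\,a,b\ge 1}\overline{p}(a)\overline{p}(b)=\overline{p}_2(n)-2\overline{p}(n)$ for $n\ge 1$) yields, for every $t\ge 0$ and $n\ge 1$,
$$\overline{p}_t(n)\equiv t\,\overline{p}(n)+\binom{t}{2}\bigl(\overline{p}_2(n)-2\overline{p}(n)\bigr)\pmod 8.$$
Consequently, Theorem \ref{T2} will follow for all $t$ once we prove $\overline{p}(n)\equiv 0\pmod 8$ and $\overline{p}_2(n)\equiv 0\pmod 8$ for each $n$ in the four progressions $4n+3,\ 8n+5,\ 8n+6,\ 16n+10$ (both conditions are also necessary, by specialising the reduction to $t=1$ and $t=2$).

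The congruences for $\overline{p}_2$ are obtained by squaring the classical 2-dissection $\tfrac{1}{f_1^2}=\tfrac{f_8^5}{f_2^5f_{16}^2}+2q\,\tfrac{f_4^2f_{16}^2}{f_2^5f_8}$, producing the exact identity
$$\frac{f_2^2}{f_1^4}=\frac{f_8^{10}}{f_2^8f_{16}^4}+4q\,\frac{f_4^2f_8^4}{f_2^8}+4q^2\,\frac{f_4^4f_{16}^4}{f_2^8f_8^2}.$$
Reading off the odd part gives $\overline{p}_2(2n+1)=4\,[q^n](f_4^4f_2^2/f_1^8)$; applying the mod-$2$ identities $f_1^2\equiv f_2$ and $f_a^2\equiv f_{2a}$ collapses $f_4^4f_2^2/f_1^8$ to $f_4^3\pmod 2$, which is a power series in $q^4$, so its coefficients at $q^{2n+1}$ and $q^{4n+2}$ are even and hence $\overline{p}_2(4n+3),\overline{p}_2(8n+5)\equiv 0\pmod 8$. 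The even-part congruences $\overline{p}_2(8n+6),\overline{p}_2(16n+10)\equiv 0\pmod 8$ follow similarly, using the key auxiliary fact $\tfrac{1}{f_1^8}\equiv\bigl(\tfrac{f_8^5}{f_2^5f_{16}^2}\bigr)^4\pmod 8$ (which is immediate from $(U+2qV)^4\equiv U^4\pmod 8$): this shows the first summand of the even-part dissection of $f_2^2/f_1^4$ lies in $q^2$-powers only, killing its odd-$n$ extractions, while a mod-$2$ reduction of the remaining $4q$-summand supplies the needed factor of $2$. The four congruences for $\overline{p}$ itself are attacked by the analogous dissection $\sum\overline{p}(2n)q^{2n}=f_8^5/(f_2^4f_{16}^2)$ and $\sum\overline{p}(2n+1)q^{2n+1}=2q\,f_4^2f_{16}^2/(f_2^4f_8)$, iterated once more on each piece.

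The main obstacle lies in the base cases for $\overline{p}$ itself: while $\overline{p}(4n+3)\equiv 0\pmod 8$ is the classical Hirschhorn--Sellers congruence, the three ``new'' progressions $8n+5,\ 8n+6,\ 16n+10$ each demand a careful second-round 2-dissection of $f_8^5/(f_2^4f_{16}^2)$ and $f_4^2f_{16}^2/(f_2^4f_8)$, with repeated mod-$2$ simplifications of products $f_k^a$ to confirm that the residual series is supported only on residues away from the target progression. Once these underlying mod-$8$ congruences for $\overline{p}$ and $\overline{p}_2$ are in hand, the uniform reduction formula derived above immediately delivers Theorem \ref{T2} for every $t\ge 0$, thereby yielding the promised simple and unified proof.
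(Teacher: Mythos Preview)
Your reduction formula
\[
\overline{p}_t(n)\equiv t\,\overline{p}(n)+\binom{t}{2}\bigl(\overline{p}_2(n)-2\overline{p}(n)\bigr)\pmod 8\qquad(n\ge 1)
\]
is correct, and so is the conclusion that the theorem for all $t$ is equivalent to the eight base congruences for $\overline{p}$ and $\overline{p}_2$; your sketched verifications of those via iterated $2$-dissections of $1/f_1^2$ all go through (for instance, for $\overline{p}(8n+6)$ one lands on $4\,[q^{2n+1}]\,f_2^7f_4^2/f_1^{10}$, and modulo $2$ this series reduces to $f_2^2f_4^2$, a series in $q^2$, giving the extra factor of $2$). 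So the plan yields a valid proof.

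The route, however, is quite different from the paper's. The paper does \emph{not} reduce to base cases: it works uniformly in $t$ by writing
\[
\sum_{n\ge 0}\overline{p}_t(n)q^n\equiv\Bigl(\tfrac{f_8^5}{f_4^2f_{16}^2}-2q\,\tfrac{f_{16}^2}{f_8}\Bigr)^{3t}\pmod 8
\]
(using $f_1^{8t}\equiv f_2^{4t}\pmod 8$), expands binomially (only three terms survive since the second summand carries a $2$), and splits into the four residue classes $t\equiv 0,1,2,3\pmod 4$. All four congruences then drop out at once from a single extraction of $q^{4n+j}$. Your approach trades that four-way case split for a clean combinatorial reduction to $t=1,2$, at the cost of having to establish eight separate base congruences by hand; the paper's approach needs no base cases but hides the dependence on $t\bmod 4$ inside binomial coefficients. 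Either way the work is comparable, and in fact your formula explains \emph{why} the paper's answer depends only on $t\bmod 4$: since $\overline{p}(n)$ is even and $\overline{p}_2(n)-2\overline{p}(n)\equiv 0\pmod 4$, the right side of your reduction depends only on $t\bmod 4$ and on $\binom{t}{2}\bmod 2$, both periodic of period $4$ in $t$.
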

\begin{theorem}\label{T3}
For non-negative integers $t$, $k$ and $n\geq 0$, we have
\begin{align}
 \label{C8}  \overline{p}_{t}(4n+3)& \equiv 0 \pmod{16}, \qquad\text{when $t=4k,4k+2,4k+3,$}\\    
\label{C6}  \overline{p}_{t}(8n+6)& \equiv 0 \pmod{16}, \qquad\text{when $t=4k,4k+2,4k+3,$}\\
\label{C16} \overline{p}_{t}(16n+14)& \equiv 0 \pmod{16}.
 \end{align}
 \end{theorem}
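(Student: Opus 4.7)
The plan is to exploit Ramanujan's identity $f_1^2/f_2 = \varphi(-q) = 1 + 2H$, where $H := \sum_{m \geq 1} (-1)^m q^{m^2}$. This rewrites the generating function as $\sum_{n \geq 0} \overline{p}_t(n)\,q^n = (1 + 2H)^{-t}$, and since $(2H)^4 \equiv 0 \pmod{16}$, the binomial series truncates at degree three to yield
\[
\frac{f_2^t}{f_1^{2t}} \equiv 1 - 2t\,H + 2t(t+1)\,H^2 - \frac{4\,t(t+1)(t+2)}{3}\,H^3 \pmod{16}.
\]
For $n \geq 1$, the coefficient $\overline{p}_t(n)$ is read off directly from the right-hand side.

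I would then use the fact that $H^j$ is supported on integers expressible as sums of $j$ positive squares. Because the nonzero squares modulo $4$, $8$, and $16$ lie respectively in $\{0,1\}$, $\{0,1,4\}$, and $\{0,1,4,9\}$, a direct residue check shows that $4m+3$, $8m+6$, and $16m+14$ are neither squares nor sums of two squares modulo the corresponding modulus. Thus the linear and quadratic contributions vanish at these arithmetic progressions, and only the cubic term survives. A short mod-$16$ analysis of the integer $\frac{4\,t(t+1)(t+2)}{3}$ shows it is $\equiv 0 \pmod{16}$ when $t \not\equiv 1 \pmod{4}$ and $\equiv 8 \pmod{16}$ when $t \equiv 1 \pmod{4}$. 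Since the hypothesis $t \in \{4k,\,4k+2,\,4k+3\}$ of \eqref{C8} and \eqref{C6} is precisely $t \not\equiv 1 \pmod{4}$, these two congruences follow at once, and the same reasoning settles \eqref{C16} for $t \not\equiv 1 \pmod{4}$ as well.

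The main obstacle is the remaining case $t \equiv 1 \pmod{4}$ of \eqref{C16}, where I must still show $(H^3)_{16m+14}$ is even. The plan is to argue that, since $16m+14 \equiv 2 \pmod{4}$, any representation $\mu_1^2+\mu_2^2+\mu_3^2 = 16m+14$ with $\mu_i \geq 1$ has exactly two odd summands and one even summand, whence $(-1)^{\mu_1+\mu_2+\mu_3} = +1$ and $(H^3)_{16m+14}$ equals the count of such ordered triples. Under the symmetric-group action on three letters, orbits of all-distinct triples have size $6$, while the only orbits of size $3$ have the two odd summands coinciding, so the parity of $(H^3)_{16m+14}$ reduces to the parity of the number of unordered solutions of the form $(\mu,\mu,\eta)$ with $\mu$ odd and $\eta$ even. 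Writing $\eta = 2\zeta$ and dividing by $2$ converts this to $\mu^2+2\zeta^2 = 8m+7$; but $\mu$ odd forces $\mu^2 \equiv 1 \pmod{8}$, which would require $\zeta^2 \equiv 3 \pmod{4}$, an impossibility. Hence no such representation exists, $(H^3)_{16m+14}$ is even, and the proof closes.
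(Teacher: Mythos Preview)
Your proof is correct and takes a genuinely different route from the paper's.

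The paper works entirely via $q$-series dissection: it uses $f_1^{16}\equiv f_2^8\pmod{16}$ to rewrite the generating function as $(f_1^2/f_2)^{7t}\pmod{16}$, substitutes the $2$-dissection $f_1^2=\dfrac{f_2f_8^5}{f_4^2f_{16}^2}-2q\,\dfrac{f_2f_{16}^2}{f_8}$, expands the $7t$-th power case by case over $t\pmod 8$, extracts the $q^{4n+2}$ and $q^{4n+3}$ parts, and then dissects once more to reach \eqref{C6} and \eqref{C16}. Your approach instead identifies $f_1^2/f_2=\varphi(-q)=1+2H$ with $H$ supported on positive squares, truncates the binomial series for $(1+2H)^{-t}$ modulo $16$, and reduces everything to the arithmetic of sums of one, two, or three squares. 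The residue checks that $4n+3$, $8n+6$, and $16n+14$ are never sums of one or two squares kill the linear and quadratic terms uniformly in $t$; the explicit evaluation of $\tfrac{4t(t+1)(t+2)}{3}\pmod{16}$ isolates $t\equiv 1\pmod 4$ as the only residual case; and the orbit-counting argument for $H^3$ at $16n+14$ (no size-$3$ orbits, since $\mu^2+2\zeta^2=8n+7$ is impossible modulo $8$) is a clean finish. Your method is more conceptual---it explains \emph{why} $t\equiv 1\pmod 4$ is exceptional and ties the congruences directly to classical facts about representations by squares---whereas the paper's dissection machinery is more mechanical but extends more routinely to the higher-modulus results in the surrounding theorems.
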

\begin{theorem}\label{T4} For non-negative integers $t$ and $n\geq0$, we have
    \begin{align}
         \label{C9} \overline{p}_{t}(8n+7)& \equiv 0 \pmod{32}.
    \end{align}
\end{theorem}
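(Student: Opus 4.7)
My approach exploits the fact that, by \eqref{C1} applied at $t=1$, $\overline{p}(n)$ is even for every $n\ge 1$. Setting
\[
H(q):=\sum_{n\ge 1}\tfrac{\overline{p}(n)}{2}\,q^{n}\in\mathbb{Z}[[q]],
\]
we have $f_{2}/f_{1}^{2}=1+2H(q)$, and the binomial theorem gives
\[
\overline{p}_{t}(N)\;=\;\sum_{k=0}^{t}\binom{t}{k}\,a_{k}(N),\qquad a_{k}(N):=2^{k}[q^{N}]H(q)^{k}\in 2^{k}\mathbb{Z}.
\]
Fixing $N=8n+7$, every term with $k\ge 5$ is automatically divisible by $32$, so it suffices to show that $a_{k}(8n+7)\equiv 0\pmod{32}$ for $k=0,1,2,3,4$.

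The first two cases are immediate: $a_{0}(8n+7)=0$ (since $8n+7\ge 1$), and $a_{1}(8n+7)=\overline{p}(8n+7)\equiv 0\pmod{64}$ by the classical congruence of Treneer, which is far stronger than what we need. For $k\in\{2,3,4\}$, the identity $(2H)^{k}=(P-1)^{k}=\sum_{j=0}^{k}\binom{k}{j}(-1)^{k-j}P^{j}$, with $P=f_{2}/f_{1}^{2}$, yields, for $N\ge 1$,
\[
a_{k}(N)=\sum_{j=0}^{k}\binom{k}{j}(-1)^{k-j}\overline{p}_{j}(N),
\]
and substituting $\overline{p}_{1}(8n+7)\equiv 0\pmod{64}$ reduces the divisibility of $a_{k}(8n+7)$ by $32$ to the three ``small-$t$'' statements $\overline{p}_{j}(8n+7)\equiv 0\pmod{32}$ for $j=2,3,4$.

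The main obstacle is therefore the direct verification of these three base cases. For each $j\in\{2,3,4\}$, I would start from the generating function $f_{2}^{j}/f_{1}^{2j}$ and apply successive $2$-dissections (in the spirit of the proofs of Theorems \ref{T1}–\ref{T3}) to isolate the coefficients at exponents $\equiv 7\pmod 8$, carefully tracking the powers of $2$ that emerge at each step until an overall factor of $32$ manifests. Once these three explicit identities are established, the expansion above closes the argument and furnishes $\overline{p}_{t}(8n+7)\equiv 0\pmod{32}$ for every $t\ge 0$.
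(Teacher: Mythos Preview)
Your reduction is logically sound: writing $f_{2}/f_{1}^{2}=1+2H$ and expanding $(1+2H)^{t}$ binomially does reduce the general statement to the cases $t=1,2,3,4$, and the inversion $a_{k}(N)=\sum_{j}\binom{k}{j}(-1)^{k-j}\overline{p}_{j}(N)$ is correct. For $t=1$ you invoke the known congruence $\overline{p}(8n+7)\equiv 0\pmod{64}$; note that this is usually attributed to Kim or to Hirschhorn--Sellers rather than to Treneer, and in any case it is an external input that the paper does not need. The remaining ``main obstacle''---the three base cases $t=2,3,4$---you do not actually carry out; you only describe the intended method.

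The paper's proof avoids this deferral entirely. Instead of expanding in powers of $2H$, it uses $f_{1}^{32}\equiv f_{2}^{16}\pmod{32}$ to rewrite $f_{2}^{t}/f_{1}^{2t}\equiv (f_{1}^{2}/f_{2})^{15t}\pmod{32}$, substitutes the $2$-dissection \eqref{D1} of $f_{1}^{2}$, and expands binomially. Because the odd-indexed piece of $f_{1}^{2}$ carries an explicit factor of $2$, only the first five terms of the expansion survive modulo $32$, and their coefficients depend only on $t\pmod{16}$. Extracting $q^{4n+3}$ then yields, in every residue class, a series supported on even exponents, so the coefficient of $q^{8n+7}$ vanishes mod $32$. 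This is a single uniform computation covering all $t$ at once. Your plan, by contrast, would have you perform essentially the same dissection analysis three separate times for $t=2,3,4$ and additionally import a nontrivial result for $t=1$; the reduction is valid but postpones rather than eliminates the analytic work, and once you commit to the dissection machinery for the base cases you might as well run it for general $t$ as the paper does.
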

In Section 4, we prove the following theorems, which were conjectured by Sarma, Saikia and Sellers \cite{saikiasarma}, and Das, Saikia and Sarma \cite{DSS}.
\begin{theorem}\label{T11} For all $i\geq 1$, $n\geq 0$ and odd $r>0$, we have
\begin{align*}
\overline{OPT}_{2^i\cdot r}(8n+7)& \equiv 0 \pmod{2^{i+4}}.
\end{align*}
\end{theorem}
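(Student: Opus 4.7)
The plan is to unpack the generating function into a $2^i$-th power and then lift the base case using binomial manipulation. From \eqref{LL1},
\begin{align*}
\sum_{n\geq 0}\overline{OPT}_{2^i r}(n)\,q^n=\frac{f_2^{3\cdot 2^i r}}{f_1^{2\cdot 2^i r}\,f_4^{2^i r}}=\left(\frac{f_2^{3r}}{f_1^{2r}\,f_4^{r}}\right)^{2^i}=F_r(q)^{2^i},
\end{align*}
where $F_r(q):=\sum_{n\geq 0}\overline{OPT}_r(n)\,q^n$. The required base ingredient is the $i=0$ instance $\overline{OPT}_r(8n+7)\equiv 0\pmod{2^4}$ for every odd $r$, established by Sarma, Saikia and Sellers \cite{saikiasarma}.

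Write $F_r(q)=P(q)+16\,S(q)$, where $16\,S(q):=\sum_{n\geq 0}\overline{OPT}_r(8n+7)\,q^{8n+7}$ (so that $S\in\mathbb{Z}[[q]]$ by the base case) and $P(q)$ is the complementary series, containing no $q^{8n+7}$ terms. The binomial theorem then yields
\begin{align*}
F_r(q)^{2^i}=P(q)^{2^i}+\sum_{k=1}^{2^i}\binom{2^i}{k}P(q)^{2^i-k}\bigl(16\,S(q)\bigr)^k.
\end{align*}
For each $1\leq k\leq 2^i$, Kummer's theorem gives $v_2\bigl(\binom{2^i}{k}\,16^k\bigr)=i-v_2(k)+4k\geq i+4$, with equality attained at $k=1$ and strict inequality otherwise. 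Hence
\begin{align*}
F_r(q)^{2^i}\equiv P(q)^{2^i}\pmod{2^{i+4}},
\end{align*}
and the task reduces to proving $[q^{8n+7}]P(q)^{2^i}\equiv 0\pmod{2^{i+4}}$.

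This residual step is the principal obstacle. My strategy is to exploit the clean factorisation
\begin{align*}
F_r(q)=\frac{\phi(-q^2)^r}{\phi(-q)^r},
\end{align*}
which is immediate from $\phi(-q)=f_1^2/f_2$ and $\phi(-q^2)=f_2^2/f_4$. Raising to the $2^i$-th power and clearing denominators gives $\phi(-q)^{2^i r}\cdot F_r(q)^{2^i}=\phi(-q^2)^{2^i r}$; since the right-hand side is a power series in $q^2$, extracting the coefficient of $q^{8n+7}$ produces the recursion
\begin{align*}
\overline{OPT}_{2^i r}(8n+7)=-\sum_{a=1}^{8n+7}[q^a]\phi(-q)^{2^i r}\cdot\overline{OPT}_{2^i r}(8n+7-a).
\end{align*}
Expanding $\phi(-q)^{2^i r}=(1-2U)^{2^i r}$ with $U=\sum_{m\geq 1}(-1)^{m+1}q^{m^2}$, one verifies that $[q^a]\phi(-q)^{2^i r}$ has $v_2\geq i+1$ for $a\geq 1$, with an extra factor of $2$ when $a\equiv 7\pmod 8$ (because $8k+7$ admits no representation as a sum of at most three squares, forcing the leading contribution to come from $k=4$). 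Closing the argument will require strong induction on $n$, combining these divisibility gains with the congruences of Das, Saikia and Sarma \cite{DSS} for $\overline{OPT}_{2^i r}(8m+j)$ at $j\in\{1,3,5\}$, and carefully tracking the 2-adic contributions to the recursion across all residues $a\pmod 8$; this 2-adic bookkeeping is the delicate part of the argument.
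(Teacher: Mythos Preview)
Your proposal is a plan rather than a proof, and the part you flag as the ``principal obstacle'' is in fact the whole theorem. The opening reduction is circular: once you show $F_r^{2^i}\equiv P^{2^i}\pmod{2^{i+4}}$, the statement $[q^{8n+7}]P^{2^i}\equiv 0\pmod{2^{i+4}}$ is \emph{equivalent} to $\overline{OPT}_{2^ir}(8n+7)\equiv 0\pmod{2^{i+4}}$, so nothing has been gained and you are back to the original claim.

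The recursion sketch has a concrete gap. The term $a=8n+7$ contributes $[q^{8n+7}]\phi(-q)^{2^ir}\cdot\overline{OPT}_{2^ir}(0)=[q^{8n+7}]\phi(-q)^{2^ir}$, and you need this single coefficient to already be divisible by $2^{i+4}$. Your three-squares observation forces the expansion $(1-2U)^{2^ir}$ to start contributing at $k=4$, but for $i\ge 2$ one has $v_2\bigl(\binom{2^ir}{4}\,2^4\bigr)=i+2$, so your stated ``extra factor of $2$'' yields only $v_2\ge i+2$, two short of the target. One can push further (the number of ordered representations of $8n+7$ as four positive squares is always a multiple of $4$, giving two more factors), but this is not in your argument, and analogous refinements are then needed for $k=5,6,\ldots$ and for the other residues of $a$; the congruences you cite from \cite{DSS} for $j\in\{1,3,5\}$ would also need to be strong enough, which you do not verify. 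As written, the induction does not close.

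The paper takes a completely different, self-contained route that uses neither the base case from \cite{saikiasarma} nor any recursion. It writes
\[
\sum_{n\ge 0}\overline{OPT}_{2^ir}(n)q^n\equiv \frac{f_2^{2^{i+1}r}}{f_4^{2^ir}}\left(\frac{f_8^5}{f_4^2f_{16}^2}-2q\,\frac{f_{16}^2}{f_8}\right)^{2^i\cdot 7r}\pmod{2^{i+4}},
\]
using $f_1^{2^{i+4}}\equiv f_2^{2^{i+3}}\pmod{2^{i+4}}$ together with the $2$-dissection \eqref{D1}, and then expands binomially, keeping only the first five terms (all higher ones vanish modulo $2^{i+4}$). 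Extracting odd powers of $q$ gives an explicit expression; for $i\ge 2$ the $q^{4n+3}$ part is visibly a series in $q^2$, while for $i=1$ one applies \eqref{D1} once more to reach the same conclusion. This direct calculation avoids the delicate $2$-adic bookkeeping entirely.
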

\begin{theorem}\label{tt1}
For all $i,j\geq 1$ and $k$ not a multiple of 2 or 3, the Conjecture \ref{Con3} and Conjecture \ref{Con5} hold.
\end{theorem}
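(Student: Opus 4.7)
The plan is to reduce the theorem to a single base case and then lift by a double induction. Write $A_m(q) := \sum_{n} \overline{OPT}_m(n)\,q^n = f_2^{3m}/(f_1^{2m} f_4^m)$, which satisfies $A_{am}(q) = A_m(q)^a$, and decompose
$$A_m(q) = F_0(q^3) + q F_1(q^3) + q^2 F_2(q^3).$$
For $N = 3^i 2^j k$ with $k$ coprime to 6 and $c_1 = 3^i 2^{j+1}$, $c_2 = 3^{i+1} 2^{j+2}$, the target congruences are equivalent to $c_r \mid F_r$ coefficient-wise. Expanding $A_m(q)^2$ and $A_m(q)^3$ and collecting by residue modulo 3, a direct computation shows that squaring produces new moduli $\gcd(2c_1, c_2^2) = 2c_1$ and $\gcd(2c_2, c_1^2) = 2c_2$ (for $i, j \geq 1$), while cubing produces $3\gcd(c_1, c_2^2, c_1^2 c_2) = 3c_1$ and $3\gcd(c_2, c_1^2, c_1 c_2^2) = 3c_2$ (for $i \geq 1$). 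These match exactly the expected $(c_1, c_2)$ at $(i, j+1)$ and $(i+1, j)$, so the induction propagates from any base case at $(1,1)$ to all $(i, j) \geq (1, 1)$.

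The $k$-direction is handled by a multinomial transfer: since $A_{6k}(q) = A_6(q)^k$, and the expansion contributes to residue class $r \in \{1,2\}$ modulo 3 only through terms $\binom{k}{a,b,c}\bigl(qF_1^{(6)}(q^3)\bigr)^b \bigl(q^2 F_2^{(6)}(q^3)\bigr)^c$ with $b+2c \equiv r \pmod 3$, the minimum divisibility is $12$ (from $(b,c)=(1,0)$ when $r=1$) and $72$ (from $(b,c)=(0,1)$ when $r=2$). It therefore suffices to prove the base case
$$\overline{OPT}_6(3n+1) \equiv 0 \pmod{12}, \qquad \overline{OPT}_6(3n+2) \equiv 0 \pmod{72}.$$
From the classical identity $f_1^4 \equiv f_2^2 \pmod 4$ we obtain $A_2(q) = f_2^6/(f_1^4 f_4^2) \equiv 1 \pmod 4$; writing $A_2(q) = 1 + 4B(q)$ with $B \in \mathbb{Z}[[q]]$,
$$A_6(q) = A_2(q)^3 = 1 + 12\,B + 48\,B^2 + 64\,B^3.$$
For $n \not\equiv 0 \pmod 3$, Fermat's little theorem gives $B(q)^3 \equiv B(q^3) \pmod 3$, so $[q^n] B^3 \equiv 0 \pmod 3$ and hence $64\,[q^n] B^3$ is divisible by $12$; combined with $12B$ and $48B^2$, this yields the mod-$12$ congruence. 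The sharper mod-$72$ congruence in residue class $2 \pmod 3$ is then established separately modulo 8 and modulo 9: the modulo-8 piece reduces to $\overline{OPT}_2(3n+2) \equiv 0 \pmod 8$, and the modulo-9 piece reduces, via the Fermat lift $A_2(q)^3 = A_2(q^3) + 3V(q)$, to showing $V_{3n+2} \equiv 0 \pmod 3$.

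The principal technical obstacle is the base case, in particular these two residue-class-2 congruences. The modulo-8 piece can be approached by combining the congruences of Lin \cite{Lin} and Adiga--Ranganatha \cite{AdiRan} on $\overline{po}_2(n)$ with a Sellers-type convolution argument analogous to Section 3; the modulo-9 piece requires a second-order refinement of the Fermat lift applied to $A_2(q)$, together with explicit 3-dissection of the relevant eta-quotient as in Das--Saikia--Sarma \cite{DSS}. Once the base case at $N = 6$ is secured, Theorem \ref{tt1} follows routinely: the squaring and cubing operations propagate the congruences to every $(i,j) \geq (1,1)$, and the multinomial transfer extends them from $k=1$ to arbitrary $k$ coprime to 6, yielding both Conjecture \ref{Con3} and Conjecture \ref{Con5}.
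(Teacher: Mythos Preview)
Your inductive scaffold is sound: the squaring and cubing steps do propagate the pair $(c_1,c_2)=(3^i2^{j+1},\,3^{i+1}2^{j+2})$ correctly to $(i,j+1)$ and $(i+1,j)$ for $i,j\ge1$, and the multinomial transfer from $k=1$ to general $k$ coprime to $6$ is valid. Your mod-$12$ base case for $\overline{OPT}_6$ via $A_2=1+4B$ and the Freshman's-dream observation $B(q)^3\equiv B(q^3)\pmod 3$ is also correct, so Conjecture~\ref{Con5} at $(i,j,k)=(1,1,1)$ is established.

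The genuine gap is the mod-$72$ base case for Conjecture~\ref{Con3}, which you explicitly leave unproven. Your reduction of the mod-$8$ piece to $\overline{po}_2(3n+2)\equiv 0\pmod 8$ is correct (since $A_6\equiv A_2\pmod 8$), but this congruence is \emph{not} supplied by the references you cite: Lin~\cite{Lin} and Adiga--Ranganatha~\cite{AdiRan} study $\overline{po}_2$ along arithmetic progressions with moduli that are powers of~$2$, not modulo~$3$, so ``combining'' their results does not obviously yield what you need. Likewise, the mod-$9$ piece is reduced correctly to $V_{3n+2}\equiv 0\pmod 3$, but this is only asserted to ``require a second-order refinement''; no argument is given. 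Since everything else hinges on this base case, the proof as written is incomplete.

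By contrast, the paper avoids any base case entirely. It proves the $3^{i+1}$-divisibility and the $2^{j+2}$-divisibility \emph{uniformly in $i,j,k$} in one stroke each: working modulo $3^{i+1}$, it uses $f_1^{p^k}\equiv f_p^{p^{k-1}}$ to rewrite the generating function as a $(3^{i-1}2^jk)$-th power of a product admitting an explicit $3$-dissection (via $f_1^3$ and $1/f_1^3$), so that only the first two binomial terms survive and the $q^{3n+2}$-part vanishes; modulo $2^{j+2}$ it does the analogous computation with the $3$-dissections of $f_1^2/f_2$ and $f_2^2/f_4$, and a short identity shows the surviving $q^{3n+2}$-contribution carries an extra factor of~$2$. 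The two halves are then glued by the Chinese Remainder Theorem. This direct route makes your induction unnecessary and, in particular, delivers the ``hard'' base case $\overline{OPT}_6(3n+2)\equiv 0\pmod{72}$ as the special case $i=j=k=1$.
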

\begin{theorem}\label{tt2}
 For all $i\geq 1$ and $j$ not a power of 2,  nor a multiple of 2 or divisible by 3, the Conjecture \ref{Con4} and Conjecture \ref{Con6} hold.
\end{theorem}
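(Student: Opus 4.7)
\textbf{Proof plan for Theorem \ref{tt2}.}

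Set $G(q):=f_2^3/(f_1^2 f_4)$, so that $\sum_{n\geq 0}\overline{OPT}_k(n)q^n=G(q)^k$ by \eqref{LL1}. The modulus-$2$ part is essentially free: since $f_2\equiv f_1^2$ and $f_4\equiv f_1^4\pmod 2$, one has $G(q)\equiv 1\pmod 2$, and therefore $G(q)^{3^i j}\equiv 1\pmod 2$, giving $\overline{OPT}_{3^i j}(n)\equiv 0\pmod 2$ for every $n\geq 1$. The substance of the theorem is the $3$-adic half, namely $\overline{OPT}_{3^i j}(3n+2)\equiv 0\pmod{3^{i+1}}$ and $\overline{OPT}_{3^i j}(3n+1)\equiv 0\pmod{3^{i}}$, which we plan to deduce from Theorem \ref{tt1} via a convolution decomposition.

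The main auxiliary result is the classical lifting congruence $h(q)^{3^{s+1}}\equiv h(q^3)^{3^s}\pmod{3^{s+1}}$, valid for any $h\in\mathbb{Z}[[q]]$ and $s\geq 0$, proved by induction on $s$ starting from $h(q)^3\equiv h(q^3)\pmod 3$ and verified by cubing the relation $h(q)^{3^s}=h(q^3)^{3^{s-1}}+3^s H(q)$ and checking that every cross-term in the binomial expansion is divisible by $3^{s+1}$. Specialising to $h=G$ with $s=i$ gives
\[
G(q)^{3^{i+1}}\equiv G(q^3)^{3^i}\pmod{3^{i+1}},
\]
whose right-hand side lies in $\mathbb{Z}[[q^3]]$; extracting coefficients yields
\[
\overline{OPT}_{3^{i+1}}(3m+1)\equiv \overline{OPT}_{3^{i+1}}(3m+2)\equiv 0\pmod{3^{i+1}}.
\]

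Since $j$ is odd, coprime to $3$, and not equal to $1$, we have $j\geq 5$, so $j-3\geq 2$ is even and coprime to $3$; write $j-3=2^a b$ with $a:=v_2(j-3)\geq 1$ and $\gcd(b,6)=1$. The decomposition $3^i j=3^{i+1}+3^i\cdot 2^a\cdot b$ translates into the convolution
\[
\overline{OPT}_{3^i j}(n)=\sum_{m=0}^{n}\overline{OPT}_{3^{i+1}}(m)\,\overline{OPT}_{3^i\cdot 2^a\cdot b}(n-m).
\]
Theorem \ref{tt1} applies to the second factor (as $i,a\geq 1$ and $\gcd(b,6)=1$), yielding divisibility by $3^{i+1}$ when $n-m\equiv 2\pmod 3$ and by $3^i$ when $n-m\equiv 1\pmod 3$. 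Splitting the sum according to $m\bmod 3$ and combining with the vanishing of $\overline{OPT}_{3^{i+1}}(m)$ modulo $3^{i+1}$ whenever $m\not\equiv 0\pmod 3$ forces every summand to carry the required power of $3$ for both $n\equiv 1,2\pmod 3$. The most delicate step is the Frobenius-type lifting congruence; once it is in place, the rest is a routine bookkeeping exercise, and the decomposition $j=3+2^a b$ is always available precisely because the hypotheses on $j$ exclude $j=1$, $2\mid j$, and $3\mid j$.
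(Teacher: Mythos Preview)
Your argument is correct and follows a genuinely different route from the paper. The paper handles the modulus-$2$ part exactly as you do, via $G(q)\equiv 1\pmod 2$. For the $3$-adic part, however, the paper does not reduce to Theorem~\ref{tt1}: it simply observes that the direct $3$-dissection computation leading to \eqref{R14} and \eqref{R10} (based on \eqref{B1}, \eqref{D4}, and \eqref{D2}) nowhere uses the parity of the exponent $2^j k$, so the same manipulation goes through verbatim with an odd $j$ in place of $2^j k$. Your approach instead factors $G(q)^{3^i j}=G(q)^{3^{i+1}}\cdot G(q)^{3^i(j-3)}$, disposes of the first factor via the Frobenius lift $G(q)^{3^{i+1}}\equiv G(q^3)^{3^i}\pmod{3^{i+1}}$ (so that it lies in $\mathbb{Z}[[q^3]]$ modulo $3^{i+1}$), and feeds the second factor into Theorem~\ref{tt1} since $j-3$ is even and coprime to $3$. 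The paper's route is marginally shorter (no auxiliary lifting lemma, no convolution bookkeeping), while yours is more structural: it makes transparent exactly where the hypothesis $j\neq 1$ is used (to guarantee $j-3\geq 2$), and it exhibits Theorem~\ref{tt2} as a genuine corollary of Theorem~\ref{tt1} rather than a parallel computation.
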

% \begin{theorem}\cite[Conjecture 10.1 and 10.3]{DSS}\label{Con3}
%     For all $i,j\geq 1$ and $k$ not a multiple of 2 or 3, we have 
%       \begin{align}
%  \label{H1}   \overline{OPT}_{3^i\cdot 2^j\cdot k}(3n+2)& \equiv 0 \pmod{3^{i+1}\cdot 2^{j+2}},\\
%   \label{H3}   \overline{OPT}_{3^i\cdot 2^j\cdot k}(3n+1)& \equiv 0 \pmod{3^{i}\cdot 2^{j+1}}.
%     \end{align}
% \end{theorem}
% \begin{theorem}\cite[Conjecture 10.2 and 10.4]{DSS}\label{Con4}
%     For all $i\geq 1$ and $j$ not a power of 2,  nor a multiple of 2 or divisible by 3, we have
%     \begin{align}
%  \label{H2}   \overline{OPT}_{3^i\cdot j}(3n+2)& \equiv 0 \pmod{3^{i+1}\cdot 2},\\
%   \label{H4}   \overline{OPT}_{3^i\cdot j}(3n+1)& \equiv 0 \pmod{3^{i}\cdot 2}.
%     \end{align}
% \end{theorem}
% The rest of the paper is organized as follows: Section \ref{Sec2} is devoted to collecting some
% $q$-series identities that are often used to prove our results. In Section 3, we prove the congruences modulo powers of 2 for overpartition $k$-tuples that generalises the results of Sellers \cite{sellers2024elementary}, subsequently the results of Saikia \cite{saikia} and  we also prove the congruence (\ref{F7}) from \cite{saikiasarma}. In Section 4, we prove the conjectures by Das et al. \cite{DSS} for overpartition $k$-tuples with odd parts for modulo powers of 2 and 3.
\section{Preliminary results}\label{Sec2}
Using the binomial theorem, it is easy to verify that for any prime $p$, 
\begin{align}
  \label{B1}  f_1^{p^k}\equiv f_p^{p^{k-1}}\pmod{p^k}.
\end{align}
  We recall some of the 2-, and 3-dissections from \cite{poq}, which we frequently use in our proofs.
    \begin{align}
     \label{D1}   f_1^2 &= \frac{f_2f_8^5}{f_4^2f_{16}^2}-2q\frac{f_2f_{16}^2}{f_8},\\
     \label{D4}\frac{1}{f_1^3}&  = a(q^3)+3qb(q^3)+9q^2c(q^3),\\
     \label{D3}\frac{f_1^2}{f_2}& = d(q^3)-2qg(q^3),\\
     \label{D2} f_1^3 & = h(q^3)-3qm(q^3),
    \end{align}
    where $a_n=a(q^n):=\displaystyle\sum_{j,k=-\infty}^\infty q^{n\cdot(j^2+jk+k^2)}$ is one of Borweins' cubic theta functions, $a(q)=a_1^2\frac{f_3^3}{f_1^{10}}$, $b(q)=a_1\frac{f_3^6}{f_1^{11}}$, $c(q)=\frac{f_3^9}{f_1^{12}}$, $ d(q)=\frac{f_3^2}{f_6}$, $g(q)=\frac{f_1f_6^2}{f_2f_3}$, $h(q)=a_1f_1$ and $m(q)=f_3^3$.
\section{Proofs of Theorems \ref{T2}, \ref{T3} and \ref{T4}}
\begin{proof}[Proof of Theorem \ref{T1}]
In view of (\ref{B1}), with $p=2$ and $k=1$, we have
    \begin{align*}
    \sum_{n=0}^\infty\overline{p}_{t}(n)q^n&=\frac{f_2^t}{f_1^{2t}}\equiv 1\pmod{2},
    \end{align*}
    which proves (\ref{C1}), and the congruences (\ref{E1}) and  (\ref{E4})  follows immediately. Now, again employing (\ref{B1}), with $p=2$ and $k=2$, we rewrite (\ref{tt}) as
    \begin{align}
        \label{M1}\sum_{n=0}^\infty\overline{p}_{t}(n)q^n=\frac{f_2^t}{f_1^{2t}}\cdot\frac{f_1^{2t}}{f_1^{2t}}= \frac{f_2^tf_1^{2t}}{f_1^{4t}}\equiv \frac{f_1^{2t}}{f_2^{t}}\pmod{4}.
    \end{align}
  
    On substituting (\ref{D1}) in (\ref{M1}), we get
\begin{align*}
 \sum_{n=0}^\infty\overline{p}_{t}(n)q^n\equiv \frac{1}{f_2^{t}}\left(\frac{f_2f_8^5}{f_4^2f_{16}^2}-2q\frac{f_2f_{16}^2}{f_8} \right)^t \pmod{4},
\end{align*}
 which is the same as
  \begin{align}
 \label{G4}  \sum_{n=0}^\infty\overline{p}_{t}(n)q^n\equiv \begin{cases}
        \frac{f_8^{5t}}{f_4^{2t}f_{16}^{2t}}\equiv 1 \pmod{4},\qquad\qquad\qquad\qquad\qquad\quad \text{when $t$ is even,}\\
         \frac{f_8^{5t}}{f_4^{2t}f_{16}^{2t}}+2q\frac{f_8^{5t-6}}{f_4^{2t-2}f_{16}^{2t-4}}\equiv \frac{f_8^t}{f_4^{2t}}+2qf_8^3\pmod{4},\quad \text{when $t$ is odd.}
   \end{cases}
\end{align}

 Extracting the terms involving $q^{4n+i}$, where $i\in\{2,3\}$, on both sides of (\ref{G4}), we get
\begin{align}
       \label{M3} \overline{p}_{t}(4n+i)& \equiv 0\pmod{4}, \quad \textrm{for ~~ any} ~~ t,
    \end{align}
 and the congruence (\ref{E2}) follows immediately. Extracting the terms involving  $q^{4n}$ on both sides of (\ref{G4}), we obtain
     \begin{align*}
   \sum_{n=0}^\infty\overline{p}_{t}(4n)q^n\equiv \begin{cases}
       1 \pmod{4},\qquad \text{when $t$ is even,}\\
         \frac{f_2^{t}}{f_1^{2t}}\pmod{4},\quad \text{when $t$ is odd.}
   \end{cases}
\end{align*}
From the above, we see that
\begin{align}
     \label{I1}\overline{p}_{t}(4n)& \equiv \overline{p}_{t}(n)\pmod{4}, \quad \textrm{for ~~ any} ~~ t.
\end{align}
From (\ref{M3}) and (\ref{I1}), congruences (\ref{C2}) and (\ref{C3}) are obtained. Now, extracting the terms involving  $q^{4n+1}$ on both sides of (\ref{G4}), we have
  \begin{align}
  \label{N1}\sum_{n=0}^\infty\overline{p}_{t}(4n+1)q^n\equiv \begin{cases}
         0 \pmod{4}, \quad\text{when $t$ is even,}\\
         2f_2^3\pmod{4},\quad \text{when $t$ is odd.}
   \end{cases}
\end{align}
From (\ref{I1}), (\ref{N1}) and the Jacobi's identity \cite[Equation 5, p. 257]{jacobi}, the congruences (\ref{C10}) and (\ref{11}) are obtained. 
\end{proof}
% \section{Congruences modulo 8 for $\overline{p}_{t}(n)$}
\
% \label{C7}   \overline{p}_{t}(4p^2n+4pj+p^2)& \equiv 0 \pmod{8},\qquad\quad \text{ $0\leq j \leq p-1$,}\\
      
%         \label{C13} \overline{p}_{t}(8n+6)& \equiv 0 \pmod{8},\\
%         \label{C8}  \overline{p}_{t}(4n+3)& \equiv 0
\begin{proof}[Proof of Theorem \ref{T2}]
Employing (\ref{B1}), with $p=2$ and $k=3$, and substituting  (\ref{D1}) in (\ref{tt}), we find that
    \begin{align*}
       \sum_{n=0}^\infty\overline{p}_{t}(n)q^n&=\frac{f_2^t}{f_1^{2t}}= \frac{f_2^tf_1^{6t}}{f_1^{8t}}\\&\equiv \frac{(f_1^{2})^{3t}}{f_2^{3t}}\equiv \left(\frac{f_8^5}{f_4^2f_{16}^2}-2q\frac{f_{16}^2}{f_8}\right)^{3t}\pmod{8},
         \end{align*}
 which is equivalent to 
  \begin{align*}
 \sum_{n=0}^\infty\overline{p}_{t}(n)q^n\equiv
 \begin{cases}
 \frac{f_8^{15t}}{f_4^{6t}f_{16}^{6t}}\pmod{8},  &\text{when $t=4k$},\\
      \frac{f_8^{15t}}{f_4^{6t}f_{16}^{6t}}+2q\frac{f_8^{15t-6}}{f_4^{6t-2}f_{16}^{6t-4}}+4q^2\frac{f_8^{15t-12}}{f_4^{6t-4}f_{16}^{6t-8}}\pmod{8}, &\text{when $t=4k+1$},\\
       \frac{f_8^{15t}}{f_4^{6t}f_{16}^{6t}}+4q\frac{f_8^{15t-6}}{f_4^{6t-2}f_{16}^{6t-4}}+4q^2\frac{f_8^{15t-12}}{f_4^{6t-4}f_{16}^{6t-8}}\pmod{8}, &\text{when $t=4k+2$},\\
       \frac{f_8^{15t}}{f_4^{6t}f_{16}^{6t}}+6q\frac{f_8^{15t-6}}{f_4^{6t-2}f_{16}^{6t-4}}\pmod{8}, &\text{when $t=4k+3$}.\\
   \end{cases}
\end{align*}
Applying (\ref{B1}) to the above, we find that
  \begin{align*}
\sum_{n=0}^\infty\overline{p}_{t}(n)q^n\equiv \begin{cases}
 1\pmod{8},  &\text{when $t=4k$},\\
       \frac{f_4^2f_8^3}{f_{16}^2}+2qf_8^3+4q^2f_{16}f_{32} \pmod{8}, &\text{when $t=4k+1$},\\
        \frac{f_8^2}{f_4^4}+4qf_8^3+4q^2f_{16}f_{32} \pmod{8}, &\text{when $t=4k+2$},\\
        \frac{f_4^6f_8}{f_{16}^2}-2qf_8^3\pmod{8}, &\text{when $t=4k+3$}.\\
   \end{cases}
\end{align*}
 Now, extracting the terms involving $q^{4n+j}$, $1\leq j\leq 3$ on both sides of the above, we get
  \begin{align*}
  \sum_{n=0}^\infty\overline{p}_{t}(4n+1)q^n &\equiv 
  \begin{cases}
0 \pmod{8},&\text{when $t=4k$,}\\
2if_2^3\pmod{8}, &\text{when $t=4k+i, ~~i\in\{1, 2, 3\}$,}
  \end{cases}\\
 \sum_{n=0}^\infty\overline{p}_{t}(4n+2)q^n &\equiv 
  \begin{cases}
0 \pmod{8},&\text{when $t=4k,~~4k+3$,}\\
4f_4^3\pmod{8}, &\text{when $t=4k+1, ~~4k+2$,}
  \end{cases}
  \end{align*}
  and (\ref{C4}). From the above two congruences,  we derive (\ref{C5}), (\ref{C12}) and (\ref{C13}). 
  \end{proof}

 \begin{proof}[Proof of Theorem \ref{T3}]
 Employing (\ref{B1}), with $p=2$ and $k=4$, and substituting  (\ref{D1}) in (\ref{tt}), we find that
  \begin{align*}
       \sum_{n=0}^\infty\overline{p}_{t}(n)q^n&=\frac{f_2^t}{f_1^{2t}}= \frac{f_2^tf_1^{14t}}{f_1^{16t}}\equiv \frac{(f_1^{2})^{7t}}{f_2^{7t}}\pmod{16}\\
       & \equiv \left(\frac{f_8^5}{f_4^2f_{16}^2}-2q\frac{f_{16}^2}{f_8}\right)^{7t}\pmod{16}.
         \end{align*}
  We see that the above congruence reduces to
  \begin{align}
 \sum_{n=0}^\infty\overline{p}_{t}(n)q^n&\equiv \frac{f_8^{35t}}{f_4^{14t}f_{16}^{14t}}+a_iq\frac{f_8^{35t-6}}{f_4^{14t-2}f_{16}^{14t-4}}+b_iq^2\frac{f_8^{35t-12}}{f_4^{14t-4}f_{16}^{14t-8}}\nonumber\\&~+c_iq^3\frac{f_8^{35t-18}}{f_4^{14t-6}f_{16}^{14t-12}}\pmod{16}, \label{G16}
\end{align}
where $t=8k+i$ and 
\begin{align*}
&(i, a_i, b_i, c_i)\in\left\{(0,0,0,0),(1,2,4,8),(2,4,12,0),(3,6,8,0),(4,8,8,0),(5,10,12,8),\right. \\&\left.(6,12,4,0),(7,14,0,0)\right\}.
  \end{align*}
 Now, extracting the terms involving  $q^{4n+i}$, $i\in\{2, 3\}$ on both sides of (\ref{G16}), we obtain
     \begin{align}
 \label{C17}\sum_{n=0}^\infty\overline{p}_{t}(4n+2)q^n&\equiv \begin{cases}
 0\pmod{16}, \qquad\qquad \text{when $t=8k,8k+7,$}\\
     4f_1^2f_2^5 \pmod{16},\qquad \text{when $t=8k+1,$}\\
      12f_2^2f_{4}^2 \pmod{16},\quad \text{when $t=8k+2$, }\\
      8f_{4}f_8 \pmod{16},\qquad \text{when $t=8k+3,8k+4,$}\\
      12f_1^2f_2^5 \pmod{16},\quad \text{when $t=8k+5,$}\\
      4f_2^2f_{4}^2 \pmod{16},\qquad \text{when $t=8k+6$, }
   \end{cases}
   \end{align}
   and
   \begin{align}
     \label{C18} \sum_{n=0}^\infty\overline{p}_{t}(4n+3)q^n&\equiv \begin{cases}
   8f_2f_{16}\pmod{16},\qquad \text{when $t=4k+1,$}\\
       0 \pmod{16},\qquad\qquad \text{otherwise}.
   \end{cases}
\end{align}
From (\ref{C18}), we obtain (\ref{C8}). Now, substituting (\ref{D1}) into (\ref{C17}) and then extracting the odd powers of $q$ on both sides, we find that
\begin{align*}
     \sum_{n=0}^\infty \overline{p}_{t}(8n+6)q^n& \equiv \begin{cases}
          0\pmod{16},\qquad\quad \text{when $t=4k,4k+2,4k+3$},\\
          8f_2f_8^2\pmod{16},\quad \text{when $t=4k+1$}.\\
      \end{cases} 
    \end{align*}
From the above, we obtain (\ref{C6}) and (\ref{C16}).
\end{proof}

% which proves the congruence (\ref{C8}).
%  Extracting the terms involving odd powers of $q$ from the above, we get
% \begin{align}
%       \overline{p}_{t}(8n+7)& \equiv 0\pmod{16}, \text{ for $n\geq 0$.}
%     \end{align}
%   which is congruence (\ref{C10}).
  
%   Now extracting the terms involving odd powers of $q$ from the above, we get
% \begin{align}
%       \overline{p}_{t}(8n+6)& \equiv \begin{cases}
%           0\pmod{16},\qquad \text{when $t=4k,4k+2,4k+3$}\\
%           8f_2f_8^2\pmod{16},\qquad \text{when $t=4k+1,$}\\
%       \end{cases} 
%     \end{align}
%      which is congruence (\ref{C6}). Now extracting the terms involving odd powers of $q$ from the above, we get
% \begin{align}
%       \overline{p}_{t}(16n+14)& \equiv 0\pmod{16},
%     \end{align}
%      which is congruence (\ref{C11}).
     
% \section{Congruences modulo 32 for $\overline{p}_{t}(n)$}
% \begin{theorem}For any  $n\geq0$, we have
%     \begin{align}
%          \label{C9} \overline{p}_{t}(8n+7)& \equiv 0 \pmod{32}, \qquad\text{when $t=4k+1,4k+2,4k+3$}.
%     \end{align}
% \end{theorem}
\begin{proof}[Proof of Theorem \ref{T4}] Employing (\ref{B1}), with $p=2$ and $k=5$, and substituting  (\ref{D1}) in (\ref{tt}), we find that
    \begin{align}
       \sum_{n=0}^\infty\overline{p}_{t}(n)q^n&=\frac{f_2^t}{f_1^{2t}}= \frac{f_2^tf_1^{30t}}{f_1^{32t}}\equiv \frac{(f_1^{2})^{15t}}{f_2^{15t}}\pmod{32}\nonumber\\
       & \equiv \left(\frac{f_8^5}{f_4^2f_{16}^2}-2q\frac{f_{16}^2}{f_8}\right)^{15t}\pmod{32}.
         \end{align}
  We can see that the above congruence reduces to
  \begin{align}
\label{G32} \sum_{n=0}^\infty\overline{p}_{t}(n)q^n&\equiv \frac{f_8^{75t}}{f_4^{30t}f_{16}^{30t}}+a_iq\frac{f_8^{75t-6}}{f_4^{30t-2}f_{16}^{30t-4}}+b_iq^2\frac{f_8^{75t-12}}{f_4^{30t-4}f_{16}^{30t-8}}+c_iq^3\frac{f_8^{75t-18}}{f_4^{30t-6}f_{16}^{30t-12}}\nonumber\\&~+d_iq^4\frac{f_8^{75t-24}}{f_4^{30t-8}f_{16}^{30t-16}} \pmod{32}, 
\end{align}
where $t=16k+i$ and
\begin{align*}
&(i, a_i, b_i, c_i, d_i)\in\left\{(0,0,0,0,0),(1,2,4,8,16),(2,4,12,0,16),(3,6,24,16,16),\right. \\
  &\left.(4,8,8,0,16),(5,10,28,24,0),(6,12,20,0,0),(7,14,16,0,0),(8,16,16,0,0),\right. \\
  &\left.(9,18,20,8,16),(10,20,28,0,16),(11,22,8,16,16),(12,24,24,0,16),\right.\\
   &\left.(13,26,12,24,0),(14,28,4,0,0),(15,30,0,0,0)\right\}.
\end{align*}
 Extracting the terms involving  $q^{4n+3}$  on both sides of (\ref{G32}), we obtain
     \begin{align}
   \label{M8}\sum_{n=0}^\infty\overline{p}_{t}(4n+3)q^n\equiv \begin{cases}
   m_jf_2f_{8}^2\pmod{32},\quad\quad \text{when $t=16k+j$,}\\
       0 \pmod{32},\qquad\quad\qquad \text{when $t=16k+i$,}\\
       16f_2f_{16}\pmod{32}\quad\qquad \text{when $t=16k+3,16k+11$},
   \end{cases}
\end{align}
where $(j,m_j)\in\{(1,8),(5,24),(9,8),(13,24)\}$ and $i\in\{0,2,4,6,7,8,10,12,14,15\}$.
 Extracting the terms involving odd powers of $q$ on both sides of (\ref{M8}), we get (\ref{C9}), and the congruence (\ref{E7}) follows directly.
\end{proof}

 \section{Proofs of Theorems \ref{T11},  \ref{tt1} and \ref{tt2}}
\begin{proof}[Proof of Theorem \ref{T11}]
Employing (\ref{B1}) with $p=2$ in (\ref{LL1}) with $k=i+4$  and then substituting  (\ref{D1}), we find that
  \begin{align*}
        \sum_{n\geq 0}\overline{OPT}_{2^i\cdot r}(n)q^n&= \frac{f_2^{3\cdot 2^i\cdot r}}{f_1^{2^{i+1}\cdot r}f_4^{2^i\cdot r}}\cdot \frac{f_1^{2^{i+1}\cdot 7r}}{f_1^{2^{i+1}\cdot 7r}}= \frac{f_1^{2^{i+1}\cdot 7r}f_2^{2^i\cdot 3r}}{f_1^{2^{i+4}\cdot r}f_4^{2^i\cdot r}}\\
        & \equiv  \frac{f_1^{2^{i+1}\cdot 7r}}{f_2^{2^{i}\cdot 5r}f_4^{2^i\cdot r}}\equiv \frac{(f_1^2)^{2^i\cdot 7r}}{f_2^{2^{i}\cdot 5r}f_4^{2^i\cdot r}}
       \equiv \frac{f_2^{2^{i+1}\cdot r}}{f_4^{2^i\cdot r}}\left( \frac{f_8^5}{f_4^2f_{16}^2}-2q\frac{f_{16}^2}{f_8}\right)^{2^{i}\cdot 7r}\pmod{2^{i+4}}\\
    & \equiv \frac{f_2^{2^{i+1}\cdot r}f_8^{2^i\cdot 35r}}{f_4^{2^{i}\cdot 15r}f_{16}^{2^{i+1}\cdot 7r}}-(2^{i+1}\cdot 7r)\frac{qf_2^{2^{i+1}\cdot r}f_8^{2^i\cdot 35r-6}}{f_4^{2^{i}\cdot 15r-2}f_{16}^{2^{i+1}\cdot 7r-4}}+(2^{i+1}\cdot 7r)(2^i\cdot 7r-1)\\ 
     &~\times\frac{q^2f_2^{2^{i+1}\cdot r}f_8^{2^i\cdot 35r-12}}{f_4^{2^{i}\cdot 15r-4}f_{16}^{2^{i+1}\cdot 7r-8}}
             -\frac{(2^{i+3}\cdot 7r)(2^i\cdot 7r-1)
        (2^{i-1}\cdot 7r-1)}{3}\\&~\times\frac{q^3f_2^{2^{i+1}\cdot r}f_8^{2^i\cdot 35r-18}}{f_4^{2^{i}\cdot 15r-6}f_{16}^{2^{i+1}\cdot 7r-12}}+\frac{(2^{i+2}\cdot 7r)(2^i\cdot 7r-1)(2^{i-1}\cdot 7r-1)(2^i\cdot 7r-3)}{3}\\ &~\times\frac{q^4f_2^{2^{i+1}\cdot r}f_8^{2^i\cdot 35r-24}}{f_4^{2^{i}\cdot 15r-8}f_{16}^{2^{i+1}\cdot 7r-16}}\pmod{2^{i+4}}\\
         & ~\equiv \frac{f_2^{2^{i+1}\cdot r}f_8^{2^i\cdot 35r}}{f_4^{2^{i}\cdot 15r}f_{16}^{2^{i+1}\cdot 7r}}-(2^{i+1}\cdot 7r)q\frac{f_2^{2^{i+1}\cdot r}f_8^{2^i\cdot 3r+2}}{f_4^{2^i\cdot 7r-2}}+(2^{i+1}\cdot 7r)(2^i\cdot 7r-1)\\ &~\times q^2\frac{f_2^{2^{i+1}\cdot r}f_8^{2^i\cdot 3r+4}}{f_4^{2^i\cdot 7r-4}}
          -\frac{(2^{i+3}\cdot 7r)(2^i\cdot 7r-1)(2^{i-1}\cdot 7r-1)}{3}q^3f_8^{9}
        \\ &~+\frac{(2^{i+2}\cdot 7r)(2^i\cdot 7r-1)(2^{i-1}\cdot 7r-1)(2^i\cdot 7r-3)}{3}q^4f_8^{12}\pmod{2^{i+4}}.
    \end{align*}
    Extracting the terms involving $q^{2n+1}$ on both sides of the above, we see that 
    \begin{align}
       \label{R1} \sum_{n\geq 0}\overline{OPT}_{2\cdot r}(2n+1)q^n& \equiv-4k\frac{f_1^{4r}f_4^{2r+2}}{f_2^{6r-2}}-16k'qf_4^9\pmod{32},\quad i=1, 
       \end{align}
       and 
       \begin{align}
       \label{T5} \sum_{n\geq 0}\overline{OPT}_{2^i\cdot r}(2n+1)q^n& \equiv-2^{i+1}\cdot 7rf_2^2f_4^2-2^{i+3}mqf_4^9\pmod{2^{i+4}},\quad i\geq2,
       \end{align}
  where $k=7r, k'=\frac{7r(14r-1)(7r-1)}{3}$ and $m=\frac{(7r)(2^i\cdot 7r-1)(2^{i-1}\cdot 7r-1)}{3}$. 
    Applying (\ref{B1}) to (\ref{R1}) and then substituting (\ref{D1}), we find that
     \begin{align*}
        \sum_{n\geq 0}\overline{OPT}_{2\cdot r}(2n+1)q^n& \equiv -4k\frac{f_1^{4}f_4^{4}}{f_2^{4}}-16k'qf_4^9\pmod{32}\\
           & \equiv -4k\frac{f_4^{4}}{f_2^{2}}\left(\frac{f_8^{10}}{f_4^4f_{16}^4}-4q\frac{f_{8}^4}{f_4^2}+4q^2\frac{f_{16}^4}{f_8^2}\right)-16k'qf_4^9\pmod{32}.
    \end{align*}
     Extracting the terms that involve $q^{2n+1}$ from both sides of the above and then applying (\ref{B1}), we obtain
    \begin{align}\label{T6}
        \sum_{n\geq 0}\overline{OPT}_{2\cdot r}(4n+3)q^n& \equiv 16kf_2f_4^2-16k'f_2^9\pmod{32}.
    \end{align}
    From (\ref{T5}) and (\ref{T6}), we see that Theorem \ref{T11} is true for all $i\geq1$. 
\end{proof}
 
 \begin{proof}[Proof of Theorem \ref{tt1}]
    From (\ref{LL1}), (\ref{B1}), (\ref{D4}) and (\ref{D2}), we find that
    \begin{align*}
        \sum_{n\geq 0}\overline{OPT}_{3^i\cdot 2^j\cdot k}(n)q^n&= \frac{f_2^{3^{i+1}\cdot 2^j\cdot k}}{f_1^{3^i\cdot 2^{j+1}\cdot k}f_4^{3^i\cdot 2^j\cdot k}}\times \frac{f_1^{3^i\cdot 2^{j}\cdot k}}{f_1^{3^i\cdot 2^{j}\cdot k}}\\
        &\equiv \frac{f_1^{3^i\cdot 2^{j}\cdot k}f_6^{3^{i}\cdot 2^j\cdot k}}{f_3^{3^i\cdot 2^{j}\cdot k}f_4^{3^i\cdot 2^j\cdot k}}\equiv \left(\frac{f_6}{f_3}\times \frac{f_1}{f_4}\right)^{3^{i}\cdot 2^j\cdot k}\pmod{3^{i+1}}\\
        & \equiv \left(\frac{f_6}{f_3}\right)^{3^{i}\cdot 2^j\cdot k} \left(\frac{f_1^3}{f_4^3}\right)^{3^{i-1}\cdot 2^j\cdot k}\pmod{3^{i+1}}\\
         & \equiv \left(\frac{f_6}{f_3}\right)^{3^{i}\cdot 2^j\cdot k}  \bigg[ (h(q^3)-3qm(q^3))  \\&~\times \left( a(q^{12})+3q^4b(q^{12})+9q^8c(q^{12})\right)\bigg ]^{3^{i-1}\cdot 2^j\cdot k}\pmod{3^{i+1}}\\
        & \equiv \left(\frac{f_6}{f_3}\right)^{3^{i}\cdot 2^j\cdot k} \left( t-3q\ell(q)\right)^{3^{i-1}\cdot 2^j\cdot k}\pmod{3^{i+1}},
    \end{align*}
    where $t=h(q^3)a(q^{12})$ and $\ell(q)=a(q^{12})m(q^3)-q^3b(q^{12})h(q^3)+3q^4b(q^{12})m(q^3)-3q^7c(q^{12})h(q^3)+9q^8c(q^{12})m(q^3)$. From the above, we observe that
  \begin{align}
        \sum_{n\geq 0}\overline{OPT}_{3^i\cdot 2^j\cdot k}(n)q^n&\equiv \left(\frac{f_6}{f_3}\right)^{3^{i}\cdot 2^j\cdot k}\left(\sum_{r=0}^{3^{i-1}\cdot 2^j\cdot k}\binom{3^{i-1}\cdot 2^j\cdot k}{r}t^{3^{i-1}\cdot 2^j\cdot k-r}(-3q\ell(q))^r\right)\nonumber\\
         & \equiv \left(\frac{f_6}{f_3}\right)^{3^{i}\cdot 2^j\cdot k}\left( t^{3^{i-1}\cdot 2^j\cdot k}+(3^{i-1}\cdot 2^j\cdot k)t^{3^{i-1}\cdot 2^j\cdot k-1}(-3q\ell(q))\right)\nonumber\\
         & \equiv \left(\frac{f_6}{f_3}\right)^{3^{i}\cdot 2^j\cdot k} \biggl(t^{3^{i-1}\cdot 2^j\cdot k}-(3^{i}\cdot 2^j\cdot k)t^{3^{i-1}\cdot 2^j\cdot k-1}\nonumber\\ &~\times\left(qa(q^{12})m(q^3)-q^4b(q^{12})h(q^3)\right)\biggr)\pmod{3^{i+1}}.\label{R2}
        \end{align}
    Extracting the terms involving $q^{3n+2}$ on both sides of the above, we obtain
    \begin{align}
       \label{R14} \overline{OPT}_{3^i\cdot 2^j\cdot k}(3n+2)\equiv 0\pmod{3^{i+1}}.
    \end{align}
    Next, we have
      \begin{align*}
        \sum_{n\geq 0}\overline{OPT}_{3^i\cdot 2^j\cdot k}(n)q^n&= \frac{f_2^{3^{i+1}\cdot 2^j\cdot k}}{f_1^{3^i\cdot 2^{j+1}\cdot k}f_4^{3^i\cdot 2^j\cdot k}}\times \frac{f_1^{3^i\cdot 2^{j+1}\cdot k}}{f_1^{3^i\cdot 2^{j+1}\cdot k}}\\
        &\equiv \frac{f_1^{3^i\cdot 2^{j+1}\cdot k}f_2^{3^{i}\cdot 2^j\cdot k}}{f_4^{3^i\cdot 2^j\cdot k}}\equiv \left(\frac{f_2^2}{f_4}\times \frac{f_1^2}{f_2}\right)^{3^{i}\cdot 2^j\cdot k}\pmod{2^{j+2}}\\
        & \equiv \left[(d(q^6)-2q^2g(q^6))\times (d(q^3)-2qg(q^3))\right]^{3^{i}\cdot 2^j\cdot k} \pmod{2^{j+2}}\\
           & \equiv  \left( t'(q^3)-2q\ell'(q)\right)^{3^{i}\cdot 2^j\cdot k}\pmod{2^{j+2}},
    \end{align*}
    where $t'(q)=d(q)d(q^{2})$ and $\ell'(q)=g(q^3)d(q^{6})+qd(q^3)g(q^{6})-2q^2g(q^3)g(q^{6})$. From the above, we have
  \begin{align}
        \sum_{n\geq 0}\overline{OPT}_{3^i\cdot 2^j\cdot k}(n)q^n&\equiv \sum_{r=0}^{3^{i}\cdot 2^j\cdot k}\binom{3^{i}\cdot 2^j\cdot k}{r}t'(q^3)^{3^{i}\cdot 2^j\cdot k-r}(-2q\ell'(q))^r\pmod{2^{j+2}}\nonumber\\
         & \equiv  t'(q^3)^{3^{i}\cdot 2^j\cdot k}+(3^{i}\cdot 2^j\cdot k)t'(q^3)^{3^{i}\cdot 2^j\cdot k-1}(-2q\ell'(q))\nonumber\\
         &~+\frac{(3^{i}\cdot 2^j\cdot k)(3^{i}\cdot 2^j\cdot k-1)}{2} t'(q^3)^{3^{i}\cdot 2^j\cdot k-2}(-2q\ell'(q))^2\pmod{2^{j+2}}\nonumber\\
         & \equiv t'(q^3)^{3^{i}\cdot 2^j\cdot k}-(3^{i}\cdot 2^{j+1}\cdot k)(t'(q^3)^{3^{i}\cdot 2^j\cdot k-1})\bigl(qg(q^{3})d(q^6)\nonumber\\&~+q^2d(q^{3})g(q^6)\bigr)-(3^{i}\cdot 2^{j+1}\cdot k)(t'(q^3)^{3^{i}\cdot 2^j\cdot k-2})\nonumber\\ & ~\times \left(q^2g^2(q^{3})d^2(q^6)+q^4d^2(q^{3})g^2(q^6)\right)\pmod{2^{j+2}}.\label{R3}
        \end{align}
    Extracting the terms involving $q^{3n+2}$ from both sides of the above, we obtain
    \begin{align}
        \sum_{n\geq 0} \overline{OPT}_{3^i\cdot 2^j\cdot k}(3n+2)q^n&\equiv -(3^{i}\cdot 2^{j+1}\cdot k)(t'(q)^{3^{i}\cdot 2^j\cdot k-2})\left(\frac{f_3^4f_2f_{12}}{f_4f_6}+\frac{f_1^2f_6^8}{f_2^2f_3^2f_{12}^2}\right)\nonumber\\
        &\equiv -(3^{i}\cdot 2^{j+1}\cdot k)(t'(q)^{3^{i}\cdot 2^j\cdot k-2})\left(2\frac{f_3^6}{f_1^2}\right)\nonumber\\& \equiv 0\pmod{2^{j+2}}.\label{R13}
    \end{align}
    Since $\gcd(3^{i+1},2^{j+2})=1$, from (\ref{R14}) and (\ref{R13}), we get
     \begin{align*}
       \overline{OPT}_{3^i\cdot 2^j\cdot k}(3n+2)\equiv 0\pmod{3^{i+1}\cdot 2^{j+2}}.
    \end{align*}
    This completes the proof of the congruence (\ref{H1}). Now, we prove the congruence (\ref{H3}). From (\ref{R2}), we find that
    \begin{align*}
        \sum_{n\geq 0}\overline{OPT}_{3^i\cdot 2^j\cdot k}(n)q^n& \equiv \left(\frac{f_6}{f_3}\right)^{3^{i}\cdot 2^j\cdot k} t^{3^{i-1}\cdot 2^j\cdot k}\pmod{3^{i}}.
        \end{align*}
    Extracting the terms involving $q^{3n+1}$ from both sides of the above, we obtain
    \begin{align}
       \label{R10} \overline{OPT}_{3^i\cdot 2^j\cdot k}(3n+1)\equiv 0\pmod{3^{i}}.
    \end{align}
       To prove the second part of the congruence (\ref{H3}), we can easily verify that
     \begin{align}
        \sum_{n\geq 0}\overline{OPT}_{3^i\cdot 2^j\cdot k}(n)q^n&= \frac{f_2^{3^{i+1}\cdot 2^j\cdot k}}{f_1^{3^i\cdot 2^{j+1}\cdot k}f_4^{3^i\cdot 2^j\cdot k}}\nonumber\\
        & \equiv \frac{f_2^{3^i\cdot 2^{j+1}\cdot k}}{f_4^{3^i\cdot 2^j\cdot k}} \equiv 1\pmod{2^{j+1}}.\label{R9}
    \end{align}
    Extracting the terms involving $q^{3n+1}$ and $q^{3n+2}$ from both sides of (\ref{R9}), we obtain
    \begin{align}
       \label{R11} \overline{OPT}_{3^i\cdot 2^j\cdot k}(3n+1)&\equiv 0\pmod{2^{j+1}}
       \end{align}
       and
       \begin{align}
       \label{R12} \overline{OPT}_{3^i\cdot 2^j\cdot k}(3n+2)&\equiv 0\pmod{2^{j+1}}.
    \end{align}
    Since, $\gcd(3^{i},2^{j+1})=1$, from (\ref{R9}) and (\ref{R11}) we get (\ref{H3}). This completes the proof of Theorem \ref{tt1}.
 \end{proof}
  \begin{proof}[Proof of Theorem \ref{tt2}]
 The proofs  of the following congruences are analogues to (\ref{R14}) and (\ref{R10}), respectively and are omitted: 
    \begin{align}
       \label{R4} \overline{OPT}_{3^i\cdot j}(3n+2)\equiv 0\pmod{3^{i+1}}
    \end{align}
    and 
    \begin{align}
     \label{R8}   \overline{OPT}_{3^i\cdot j}(3n+1)\equiv 0\pmod{3^{i}}.
    \end{align}
Second part of the congruence (\ref{H2}) follows directly, since
     \begin{align}
        \sum_{n\geq 0}\overline{OPT}_{3^i\cdot j}(n)q^n&= \frac{f_2^{3^{i+1}\cdot j}}{f_1^{3^i\cdot 2j}f_4^{3^i\cdot j}}\nonumber\\
        & \equiv \frac{f_2^{3^i\cdot 2\cdot j}}{f_4^{3^i\cdot j}} \equiv 1\pmod{2}.\label{R5}
    \end{align}
    Extracting the terms involving $q^{3n+1}$ and $q^{3n+2}$ from both sides of (\ref{R5}), we obtain
    \begin{align}
       \label{R6} \overline{OPT}_{3^i\cdot j}(3n+1)&\equiv 0\pmod{2},\\
       \label{R7} \overline{OPT}_{3^i\cdot j}(3n+2)&\equiv 0\pmod{2}.
    \end{align}
    Since $\gcd(3^{i+1},2)=1$, from (\ref{R4}) and (\ref{R7}), we arrive at the congruence (\ref{H2}). Since $\gcd(3^{i},2)=1$, from (\ref{R8}) and (\ref{R6}) we obtain (\ref{H4}). This completes the proof of Theorem \ref{tt2}.
 \end{proof}
 \bibliographystyle{siam}
 \bibliography{myref}

\end{document}